\newcommand{\R}{\mathbb{R}}
\newcommand{\N}{\mathbb{N}}
\newcommand{\C}{\mathbb{C}}
\newcommand{\Z}{\mathbb{Z}}
\newcommand{\eps}{\varepsilon}
\newcommand{\fhi}{\varphi}
\newcommand{\vth}{\vartheta}
\newcommand{\del}{\partial}
\renewcommand{\div}{\mathrm{div}}
\newcommand{\curl}{\mathrm{curl}}
\newcommand{\vol}{\mathrm{Vol}}
\newcommand{\one}{\mathrm{\bf 1}}
\newcommand{\eff}{\mathrm{eff}}
\def\vectdue#1#2{\left(\begin{matrix}#1\cr #2\end{matrix}\right)}
\def\vecttre#1#2#3{\left(\begin{matrix}#1\cr #2\cr #3\end{matrix}\right)}
\def\calK{\mathcal{K}}
\renewcommand{\Re}{\mathrm{Re}}
\def\Xint#1{\mathchoice
   {\XXint\displaystyle\textstyle{#1}}%
   {\XXint\textstyle\scriptstyle{#1}}%
   {\XXint\scriptstyle\scriptscriptstyle{#1}}%
   {\XXint\scriptscriptstyle\scriptscriptstyle{#1}}%
   \!\int}
\def\XXint#1#2#3{{\setbox0=\hbox{$#1{#2#3}{\int}$}
     \vcenter{\hbox{$#2#3$}}\kern-.5\wd0}}
\def\meanint{\Xint-}
\newtheorem{theorem}{Theorem}[section]
\newtheorem{lemma}[theorem]{Lemma}
\newtheorem{proposition}[theorem]{Proposition}
\numberwithin{equation}{section}
\title{{\LARGE\bf A negative index meta-material for Maxwell's
    equations}}
\author{A.\,Lamacz, B.\,Schweizer\thanks{Technische Universit\"at
    Dortmund, Fakult\"at f\"ur Mathematik, Vogelpothsweg 87, D-44227
    Dortmund, Germany.}}
\date{July 29, 2015}
\begin{document}

\maketitle

\begin{abstract} We derive the homogenization limit for time harmonic
  Maxwell's equations in a periodic geometry with periodicity length
  $\eta>0$. The considered meta-material has a singular sub-structure:
  the permittivity coefficient in the inclusions scales like
  $\eta^{-2}$ and a part of the substructure (corresponding to wires
  in the related experiments) occupies only a volume fraction of order
  $\eta^2$; the fact that the wires are connected across the
  periodicity cells leads to contributions in the effective system. In
  the limit $\eta\to 0$, we obtain a standard Maxwell system with a
  frequency dependent effective permeability $\mu^{\eff}(\omega)$ and
  a frequency independent effective permittivity $\eps^{\eff}$. Our
  formulas for these coefficients show that both coefficients can have
  a negative real part, the meta-material can act like a negative
  index material. The magnetic activity $\mu^{\eff}\neq 1$ is obtained
  through dielectric resonances as in previous publications. The wires
  are thin enough to be magnetically invisible, but, due to their
  connectedness property, they contribute to the effective
  permittivity. This contribution can be negative due to a negative
  permittivity in the wires.
\end{abstract}

\smallskip {\bf Keywords:} Maxwell's equations, negative index material,
homogenization

  \smallskip
  {\bf MSC:} 78M40, 35B27, 35B34
%
%

\pagestyle{myheadings} 
\thispagestyle{plain} 

\markboth{A.\,Lamacz, B.\,Schweizer}{A negative index meta-material
  for Maxwell's equations}

\section{Introduction}

Light and other electromagnetic waves exhibit interesting refraction
phenomena in meta-materials. While no natural material posesses a
negative index, by now, there exists a variety of negative index
meta-materials (a meta-material is an assembly of ordinary materials,
arranged in small substructures).  A smart choice of the microscopic
geometry can lead to macroscopic properties of the meta-material that
are not shared by any of the ordinary materials that it is made
of. Even though no natural negative index material is known, a
distribution of metallic or dielectric resonators and metallic wires
can act, effectively, as a medium with a negative index.  We present
here a periodic meta-material (periodicity length $\eta>0$) with
singular sub-structures (wires with relative radius of order $\eta$,
i.e. with absolute radius of order $\eta^2$) which is, on the one
hand, close to the experimental set-up, and, on the other hand,
accessible to a rigorous mathematical analysis.

From a modelling point of view, the Maxwell system is very simple: We
investigate solutions $(E^\eta, H^\eta)$ to the time harmonic
Maxwell's equations in three dimensions. The system contains three
parameters: the frequency $\omega\in \R$ and two material parameters,
the permeability $\mu = \mu(x)$ and the permittivity $\eps =
\eps(x)$. Since natural materials have a relative permeability close
to one, we assume that the permeability coincides with that of vacuum,
$\mu \equiv \mu_0 >0$.  All the complex behavior of the
micro-structure is encoded in one single coefficient, the relative
permittivity $\eps_\eta : \R^3\to \C$.  Here, we consider periodic
coefficients $\eps_\eta$ with large values of $|\eps_\eta|$ in the
inclusions. Using the parameter $\eps_0>0$ (permittivity of vacuum),
we investigate solutions $(E^\eta, H^\eta)$ of
\begin{align}
 \label{eq:Max1}
 \curl\, E^\eta &=\phantom{-} i\omega\mu_0 H^\eta\,,\\
 \label{eq:Max2}
 \curl\, H^\eta &=-i\omega\eps_\eta\eps_0 E^\eta\,.
\end{align}
Our aim is to analyze the behavior of the solutions $(E^\eta, H^\eta)$
in the limit $\eta\to 0$.

\smallskip The relative permittivity is given by two complex numbers
and the geometry of two types of inclusions, $\Sigma_\eta\subset \R^3$
and $\Gamma_\eta\subset \R^3$, both periodic with periodicity
$\eta$. The set $\Sigma_\eta$ consists of bulk inclusions, the number
of which is of order $\eta^{-3}$ (one inclusion in each periodicity
cell). The set $\Gamma_\eta$ represents a system of long and thin
wires, each wire has a radius of order $\eta^2$ and length of order
$1$ (the wire has a macroscopic length and a width that is small
compared to the periodicity length). With parameters $\eps_b,
\eps_w\in\C$ with $\Im(\eps_b), \Im(\eps_w)> 0$ we study the
coefficient
\begin{align}
 \label{eq:defepseta}
 \eps_\eta =
 \begin{cases}\displaystyle
   \eps_b\eta^{-2} &\text{ in } \Sigma_\eta\,,\\[1mm]
   \displaystyle
   \eps_w\eta^{-2} &\text{ in } \Gamma_\eta\,,\\[1mm]
   1 &\text{ in } \R^3\setminus (\Sigma_\eta\cup \Gamma_\eta)\,.
 \end{cases}
\end{align}

Our result is a description of the weak limit $(E, H)$ of a solution
sequence $(E^\eta, H^\eta)$. We show that an appropriately defined
pair $(\hat E, \hat H) := (E, (\hat\mu)^{-1} H)$ solves a Maxwell
system with two effective parameters $\mu^{\eff}(\omega)$ and
$\eps^{\eff}$, cp. \eqref {eq:eff1}-- \eqref {eq:eff2}.  The two
effective parameters are given by explicit formulas that involve cell
problems for the electric and for the magnetic field. In an
appropriate geometry and for an appropriate frequency $\omega$, we
find that both coefficients can have a negative real part
(simultaneously): The effective permeability $\mu^{\eff}(\omega)$ can
become negative due to resonances in the bulk inclusions
$\Sigma_\eta$.  Due to their vanishing volume fraction, the wires do
not influence the parameter $\mu^{\eff}(\omega)$. On the other hand,
due to their special topology (they form connected objects of
macroscopic length), they influence the permittivity $\eps^{\eff}$. If
$\eps_w$ has a negative real part (which is the case for many metals),
then $\eps^{\eff}$ can be negative, see \eqref {eq:eps-eff-def}.

\subsection{Literature}

Half a century ago, Veselago investigated in the theoretical study
\cite{Veselago1968} materials with negative $\eps$ and negative
$\mu$. Since the product $\eps \mu$ is positive, waves can travel in
such a medium, but surprising effects such as negative refraction and
perfect lensing can be expected. Since no natural materials exhibit a
negative $\mu$, the studies of Veselage have not been continued until
in about 2000 first ideas were published on how to construct a
negative index meta-material, see e.g.\,\cite{Pendry2004}.  Regarding
applications of negative index materials we mention the effect of
cloaking by anomalous localized resonance, see
e.g.\,\cite{BouchitteSchweizer-Cloak} and \cite{Kohn-LSW}.

A mathematical analysis of meta-materials became possible with the
development of the method of homogenization. The homogenization
technique was successfully applied in many situations, ranging from
porous media to wave equations. Also the homogenization of Maxwell's
equations has been performed. The standard result of a homogenization
process is the following: Given periodically oscillating coefficients
$a_\eta(x)$ and a corresponding solution sequence $u^\eta(x)$, every
weak limit $u$ of the solution sequence satisfies the original
equation with an averaged coefficient $a^\eff$. In the context of the
Maxwell system, the oscillating coefficients are $a_\eta(x) =
(\eps_\eta(x), \mu_\eta(x))$ and the solution is $u^\eta = (E^\eta,
H^\eta)$.  In 
\cite{MR2029130}, results of this kind are obtained for Maxwell's
equations.

Of particular interest are those homogenization results that lead to a
qualitatively different equation for the limit $u$. Examples are the
double porosity model in porous media \cite{Arbogast-1052874} or the
dispersive limit equation for waves in heterogeneous media
\cite{DLS-MR3191584, DLS-2}.  The problem at hand is similar in that
we want to combine positive index materials to obtain an effective
negative index material. Typically, such effects are obtained by
micro-structures that involve extreme parameter values and/or by
micro-structures that contain finer substructures.

In this work, we will actually use both, extreme values and fine
substructures, to obtain a surprising new limit formula in the
homogenization of Maxwell's equations. One of the pioneers in the
field is Bouchitt\'e who, together with co-authors, initiated the
field with the analysis of wire structures \cite{MR2262964},
\cite{MR1444123}. It was shown that extreme coefficient values in the
(thin) wires can lead to the effect of a negative $\eps^\eff$.

In order to obtain a negative index material, additionally a negative
$\mu^\eff$ has to be created. Several ideas have been analyzed. Based
on the analysis of a model problem, a mathematical result has been
obtained in \cite{KohnShipman}. A truely three-dimensional analysis of
a setting that is also close the some of the original designs has been
carried out in \cite{BouchitteSchweizer-Max}. In that work, it was
shown that the periodic split ring structure can lead to a negative
$\mu^\eff$. A similar result has been obtained later also for flat
rings of arbitrary shape in \cite{Lamacz-Schweizer-Max} (while the
rings in \cite{BouchitteSchweizer-Max} had to be tori).

While this seems to be the state of the art with regard to metallic
inclusions, a simpler approach using dielectric materials has been
invented in \cite{CRAS} and later developed in
\cite{BouchitteBourel2009}: Dielectric inclusions can lead to
resonances (so-called Mie-resonances) which result in an effective
magnetic activity. With resonant dielectric inclusions, a negative
$\mu^\eff$ can be obtained even without subscale variations of the
periodic geometry.

With this contribution we close a gap that has been left open by the
above mentioned works: The emphasis has always been to create either a
negative $\eps^\eff$ \cite{MR2262964, MR1444123} or a negative
$\mu^\eff$ \cite {BouchitteBourel2009, CRAS, BouchitteSchweizer-Max,
  Lamacz-Schweizer-Max} --- here we present a construction that
achieves, simultaneously, a negative $\eps^\eff$ and a negative
$\mu^\eff$ (we always refer to the real part of the coefficient). An
important point in our construction is the decoupling: $\eps^\eff$
depends only on $\eps_w$ and the shape $\Sigma_Y$ of the bulk
inclusions; given $\Sigma_Y$, we can choose $\eps_w$ to have
$\eps^\eff$ negative (independent of the frequency). Tuning the
frequency, we can generate a resonance in $\Sigma_Y$ which makes
$\mu^\eff$ negative; this process does not affect $\eps^\eff$.

\smallskip We conclude this section with a comparison of our results
to those of \cite{MR2262964}, where also the effect of thin wires has
been analyzed. In \cite{MR2262964}, the volume fraction $\theta_\eta$
of the wires is much smaller than in our study, namely $\theta_\eta
\sim \exp(-2\eta^{-2})$ (here, it is $\theta_\eta \sim \eta^2$; we
observe that the permittivity is scaled as in our work as $|\eps_\eta|
\sim \theta_\eta^{-1}$). The effective equation in \cite{MR2262964} is
a Maxwell type system that is coupled to a macroscopic equation for a
new variable (denoted by $J$ in \cite{MR2262964}). Our result is much
simpler in the sense that we obtain a standard Maxwell system in the
limit, see \eqref {eq:eff1}-- \eqref {eq:eff2}.  In this sense, our
result is very different to that of \cite{MR2262964}, despite the
similarities in the setting (thin wires with vanishing volume fraction
of macroscopic length). In order to understand how such a different
outcome is possible, let us mention the following point (somewhat
technical, but important): the tiny radii in \cite{MR2262964} make it
impossible to construct test-functions as in \eqref
{eq:psi-def}--\eqref{eq:g-def}, hence the macroscopic limit cannot be
obtained as in our setting.

\subsection{Geometry} 

The underlying idea of our approach is simple: We use dielectrical
bulk-inclusions (given by the subset $\Sigma_\eta$) as Mie-resonators,
they lead to a negative $\mu^\eff$. Additionally, we include thin
wires (given by the subset $\Gamma_\eta$) that have a negative $\eps$
(which is the case for many metals); the wires lead to a negative
$\eps^\eff$. Since the wires are thin, their effect decouples from the
Mie-resonance.

We emphasize that, defining the two sets $\Sigma_\eta$ and
$\Gamma_\eta$, the Maxwell system is completely described by
\eqref{eq:Max1}--\eqref{eq:defepseta}. Below, we consider the limit of
any bounded solution sequence and do not specify boundary
conditions. Our result is therefore applicable to any boundary value
problem.

\paragraph{Microscopic geometry.}
We construct a {\em periodic} meta-material in three space
dimensions. We start from the periodicity cube $Y = [0,1)^3$; since we
will always impose periodicity conditions on the cube $Y$, we may also
regard it as the flat torus $\mathbb{T}^3$. We now construct two
subsets, $\Sigma_Y, \Gamma_Y^\eta \subset Y$, compare Figure \ref
{F:Geometry} for an illustration.

\smallskip {\em Dielectric resonator.} The dielectric resonator is
given by a subset $\Sigma_Y \subset Y$. We assume that $\Sigma_Y$ is a
simply connected open set that has a Lipschitz boundary and that is
compactly contained in $(0,1)^3$. We call the set the dielectric
resonator since, in order to have resonances in $\Sigma_Y$, we must
assume that $\eps_b$ has a positive real part, cp.\,\eqref{eq:mu-eff}.

\smallskip {\em Thin wires.} The second subset are the (metallic)
wires. We assume that three wires $\Gamma_Y^{\eta,j}\subset Y$,
$j=1,2,3$, are contained in $Y$. We assume that each wire has a radius
of order $\eta$ inside $Y$ and that it connects two opposite sides of
$\del Y$ in a periodic fashion. We assume additionally that the wires
do not intersect each other or the resonator $\Sigma_Y$; this
assumption is not satisfied in some of the experimental designs
(``fishnet-structure''), but we do not see it as an essential property
for our method to work.

In order to make our assumptions precise (and, at the same time, to
keep the construction of special test-functions in Section \ref
{sec.macro-equations} accessible), we restrict ourself to cylindrical
wires of relative radius $\alpha>0$: For a point $(y_1^{(3)},
y_2^{(3)}) \in (0,1)^2$, the wire with index $j = 3$ has the central
line $\Gamma_Y^3 := (y_1^{(3)}, y_2^{(3)}) \times [0,1)$ and is given
by
\begin{equation}
  \label{eq:Gamma-j-eta}
  \Gamma_Y^{\eta,3} 
  := B_{\alpha\eta}((y_1^{(3)}, y_2^{(3)})) \times [0,1) \subset Y\,.
\end{equation}
The wires with indices $1$ and $2$ are constructed accordingly. The
union of the three wires is denoted by $\Gamma_Y^\eta :=
\bigcup_{j=1}^3 \Gamma_Y^{\eta,j}$.  The construction assures that the
relative radius of the wires inside the cube is $\alpha\eta$ such that
the wires vanish in the limit $\eta\to 0$. We will see that the wires
do not enter the cell problem, but, due to their connectedness across
cells, they do affect the macroscopic equations.  The real part of the
permittivity in the wires has to be negative in order to obtain a
negative quantity $\eps^\eff$ in \eqref {eq:eps-eff-def}. Since metals
can have a negative permittivity, we think of metallic wires.

\begin{figure}[h]
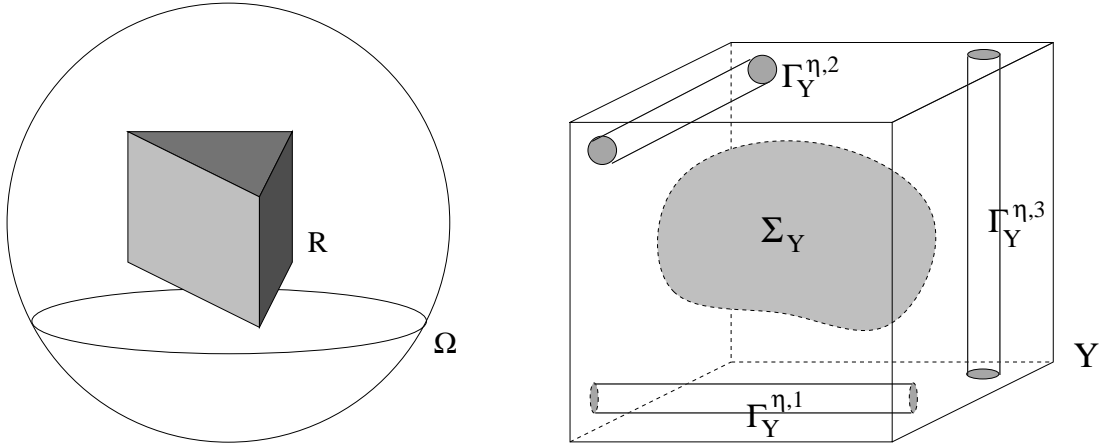

  \begin{center}
    \includegraphics[width=0.40\textwidth]{./Neg-Macro-Geom}\hspace*{13mm}
    \includegraphics[width=0.47\textwidth]{./Neg-Micro-geom-2}
    \caption{ {\em Left:} The macroscopic domain $\Omega$ and the
      scatterer $R\subset \Omega$ (here, $\Omega$ is a ball and $R$ is
      a prism). Maxwell's equations are solved on $\Omega$. The
      equations contain the coefficient $\eps_\eta$ which is $1$ in
      $\Omega\setminus R$ and which has large values and a
      micro-structure in $R$. Our interest is to describe the solution
      fields $(E^\eta, H^\eta)$ in the limit $\eta\to 0$. {\em Right:}
      The periodicity cell $Y$ with the subsets $\Sigma_Y$ (the
      resonator) and $\Gamma_Y^\eta$ (the wires). In the two subsets
      the coefficient $\eps_\eta$ has absolute values of order
      $\eta^{-2}$. The wires have the radius $\alpha\eta$ in the
      periodicity cell, which means that they have the radius
      $\alpha\eta^2$ in the macroscopic domain. The periodic
      construction yields connected wires with a length of order
      $\eta^0 = 1$.}
      \label{F:Geometry}
  \end{center}
\end{figure}

\paragraph{Macroscopic geometry.}
We study Maxwell's equations in an open set $\Omega\subset
\R^3$. Contained in $\Omega$ is a second domain $R\subset \Omega$ with
$\bar R\subset \Omega$. The set $R$ consists of the meta-material, on
the set $\Omega\setminus R$ we have relative permeability and relative
permittivity equal to unity, see Figure \ref {F:Geometry}, left part.

In order to define the microstructure in $R$, we use indices $k\in
\Z^3$ and shifted small cubes $Y^\eta_k := \eta (k + Y)$. We denote by
$\calK := \{k\in \Z^3 | Y^\eta_k\subset R\}$ the set of indices $k$
such that the small cube $Y^\eta_k$ is contained in $R$.  Here and in
the following, in summations or unions over $k$, the index $k$ takes
all values in the index set $\calK$. The number of relevant indices
has the order $| \calK | = O(\eta^{-3})$.

Using the local subsets $\Sigma_Y$ and $\Gamma^{\eta,j}_Y$, we can now
define the meta-material by setting
\begin{equation}
  \label{eq:slit-eta-3mac}
  \Sigma_\eta := \bigcup_{k\in \calK} \eta(k+\Sigma_Y)\,,\quad
  \Gamma_\eta := \bigcup_{k\in \calK} \bigcup_{j\in \{1,2,3\}} 
  \eta(k+\Gamma^{\eta,j}_Y)
  = \bigcup_{k\in \calK} \eta(k+\Gamma^{\eta}_Y)\,.
\end{equation}

\subsection{Main result}

With the help of cell-problems, we define two tensors $\eps^{\eff},
\mu^{\eff}(\omega)\in \C^{3\times 3}$.  The tensor $\eps^{\eff}$ is
defined in \eqref {eq:eps-eff-def} and has two contributions: The
matrix $A^\eff$ is defined with the standard electrical cell problem,
the additive term $\alpha\eps_w$ can make the real part of
$\eps^{\eff}$ negative. The effective permeability tensor $\mu^{\eff}$
is defined in \eqref {eq:HhatH-mu} through a coupled cell-problem that
appeared already in \cite {BouchitteBourel2009,
  BouchitteSchweizer-Max} and was studied there. The formula for
$\mu^{\eff}(\omega)$ is a result of the resonance properties of
$\Sigma_Y$; a negative real part of $\mu^{\eff}(\omega)$ can be
expected for frequencies that are close to eigenvalues.

\paragraph{Effective equations.} 
We derive an {\em effective} Maxwell system in $\Omega$:
\begin{align}
   \curl\ \hat E &= \phantom{-} i\omega\mu_0\, \hat\mu \hat H\,,
   \label{eq:eff1}\\
   \curl\ \hat H &= -i\omega\eps_0\, \hat\eps \hat E\,.
   \label{eq:eff2}
\end{align}
In this system, the coefficients are $\hat\mu(x) = \hat\eps(x) = 1$
for $x\in \Omega\setminus R$, whereas for $x\in R$
\begin{align*} \hat\eps(x) = \eps^{\eff}\,,\quad \hat\mu(x) =
   \mu^{\eff}(\omega)\,.
\end{align*}
A consequence of our result is the following: Since outside the
scatterer $R\subset \Omega$ the effective coefficients are equal to
$1$, the field $(\hat E,\hat H)$ coincides there with the weak limit
$(E,H)$. In particular, for $x\in \Omega\setminus R$, as $\eta\to 0$,
the solutions $(E^\eta,H^\eta)$ converge to $(\hat E,\hat H)$,
solution to the effective system \eqref {eq:eff1}--\eqref
{eq:eff2}. Let us now formulate the precise statement in our main
theorem.


\begin{theorem}\label{thm:effective-Max}
  Let $(E^\eta, H^\eta)$ be a sequence of solutions to \eqref
  {eq:Max1}--\eqref {eq:Max2}, where the coefficient $\eps_\eta$ is
  given by \eqref {eq:defepseta} and the geometry of inclusions
  $\Sigma_\eta$ and wires $\Gamma_\eta$ is as described above,
  cf.\,\eqref {eq:Gamma-j-eta} and \eqref{eq:slit-eta-3mac}.  We
  assume that the solution sequence satisfies the energy-bound
  \begin{equation}
    \label{eq:bound-improved}
    \int_{\Omega} |H^\eta|^2 + |\eps_\eta|\, |E^\eta|^2 \le C\,,
  \end{equation}
  where $C$ does not depend on $\eta$, and that $(E^\eta, H^\eta)$
  converges weakly in $L^2(\Omega)$ to a limit $(E,H)$.  Then the
  limit $(E,H)$ has the property that the modified fields $\hat E :=
  E$ and $\hat H := \hat\mu^{-1} H$ solve \eqref {eq:eff1}--\eqref
  {eq:eff2} in the sense of distributions on $\Omega$.
\end{theorem}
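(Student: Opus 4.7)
The statement splits naturally into a Faraday and an Ampère half, with the magnetic effective parameter $\mu^{\eff}(\omega)$ and the electric parameter $\eps^{\eff}$ extracted by essentially decoupled homogenization arguments. The Faraday equation $\curl E^\eta = i\omega\mu_0 H^\eta$ has constant coefficients and passes to the weak $L^2$-limit directly, yielding $\curl E = i\omega\mu_0 H$ distributionally on $\Omega$. To rewrite this as \eqref{eq:eff1}, one must identify $H = \hat\mu\,\hat H$, i.e.\ $H = \mu^{\eff}(\omega)\hat H$ in $R$ and $H = \hat H$ outside. This identification is a two-scale statement: on the resonator region we extract a two-scale limit $H_0(x,y)$ whose microscopic profile on $\Sigma_Y$ is governed, jointly with an auxiliary electric corrector, by the coupled cell problem \eqref{eq:HhatH-mu} already analysed in \cite{BouchitteBourel2009, BouchitteSchweizer-Max}; its solvability produces the formula for $\mu^{\eff}(\omega)$ and the linear relation $\hat H\mapsto H$. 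The wires, being magnetically invisible due to their $O(\eta^2)$ absolute radius, do not enter this cell problem.

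The Ampère equation is the substantive part. The energy bound \eqref{eq:bound-improved} gives only $\eps_\eta^{1/2} E^\eta$ bounded in $L^2$, so $\eps_\eta E^\eta$ is not weakly compact in any naive sense and must be tested against oscillating functions adapted to the microstructure. The plan is to introduce two families of oscillating test fields $\psi_\eta$ and $g_\eta$, as announced in \eqref{eq:psi-def}--\eqref{eq:g-def}. The $\psi_\eta$ are built from correctors for the standard electric cell problem on $Y\setminus(\Sigma_Y\cup\Gamma_Y^\eta)$ and yield, after integration by parts against $\curl H^\eta = -i\omega\eps_0\eps_\eta E^\eta$, the bulk matrix $A^{\eff}$ in \eqref{eq:eps-eff-def}. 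The $g_\eta$ are concentrated in a tubular neighborhood of the wires and exploit the \emph{connectedness} of $\Gamma_\eta$ across periodicity cells: integration by parts converts the singular volume integral $\int_{\Gamma_\eta}\eps_\eta E^\eta\cdot g_\eta$, whose integrand is of size $\eta^{-2}$ on a set of measure $O(\eta^2)$, into a macroscopic flux-type integral that produces the additive $\alpha\eps_w$ contribution. Combined with the two-scale information already obtained on $\Sigma_Y$ during the magnetic step, this identifies the distributional limit of $\eps_\eta E^\eta$, restricted to admissible test fields, with $\hat\eps\,\hat E$.

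To assemble, for $\varphi\in C_c^\infty(\Omega,\C^3)$ one rewrites $\int_\Omega \hat H\cdot\curl\varphi = \int_\Omega \hat\mu^{-1} H\cdot\curl\varphi$, uses the magnetic identification to express this as a weak-$L^2$ limit of $H^\eta$ paired with a suitably modified field, invokes the original Ampère law, and passes to the limit using the test functions $\psi_\eta,\,g_\eta$ to obtain $-i\omega\eps_0\int_\Omega \hat\eps\,\hat E\cdot\varphi$, yielding \eqref{eq:eff2}. The principal obstacle is the wire part of Step 2: constructing admissible functions $g_\eta$ that respect the singular radius $\alpha\eta^2$ while remaining smooth and bounded, verifying their weak limits and the concentration structure of $\curl g_\eta$, and showing that the formally indeterminate product $\eps_w\eta^{-2}\cdot O(\eta^2)$ produces exactly the announced $\alpha\eps_w$ term with no additional "strange term" of the type encountered in \cite{MR2262964}. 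The disjointness of $\Sigma_Y$ and $\Gamma_Y^\eta$, imposed on the geometry, is what enforces the clean decoupling between the magnetic resonance on the bulk inclusions and the electric contribution of the wires.
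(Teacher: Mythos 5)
Your proposal follows essentially the same route as the paper: the Faraday equation passes to the distributional limit directly, with $\hat H$ identified through the coupled $(H_0,J_0)$ cell problem and the geometric (circulation) averaging that defines $\mu^{\eff}$, while the Amp\`ere equation is handled with exactly the two families of oscillating test functions you describe --- corrector-type fields producing $A^{\eff}$ and wire-concentrated fields (built from the logarithmic potential of the cylinder) whose integration by parts converts the singular wire integral into the macroscopic contribution $\pi\alpha^2\eps_w$ (your ``$\alpha\eps_w$''). The points you flag as the principal obstacle --- the admissible construction of $g_\eta$, the Stokes/circulation computation yielding the factor $2\pi\alpha$, and the vanishing-capacity argument needed for the $E_0$ cell problem near the wire centers --- are precisely the technical content of Sections 2--3 of the paper, carried out along the lines you outline.
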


\medskip {\em 1. Comments on the fields $\hat E$ and $\hat H$.} The
field $\hat E$ coincides with $E$, we introduced the new name only to
have a consistent notation. Loosely speaking, the field $\hat H$ is
the limit of the fields $H^\eta$ outside the inclusions. It is
obtained from the two-scale limit $H_0(x,y)$ of $H^\eta$ by geometric
averaging, see \eqref {eq:H-normalization-rewrite}; geometric
averaging is introduced in Section \ref{ssec.geometric-av}.

\smallskip {\em 2. Easy parts of the Theorem.}  Since we assume that
the imaginary parts of $\eps_b$ and $\eps_w$ are positive, the
estimate \eqref {eq:bound-improved} implies the $L^2(\Omega)$
boundedness of $E^\eta$ and $H^\eta$. Since every weak limit is also a
distributional limit, we may therefore take the distributional limit
in \eqref {eq:Max1} and obtain $\curl\ E = i\omega\mu_0\, H$.  Because
of $\hat E := E$ and $H = \hat\mu \hat H$, we conclude \eqref
{eq:eff1} in the sense of distributions.  The challenging part of the
proof is to derive the effective equation \eqref {eq:eff2} from the
Maxwell equation \eqref {eq:Max2}.

\smallskip {\em 3. Comments on the a priori estimate
  \eqref{eq:bound-improved}.} We have formulated our main theorem with
a boundedness assumption on the solution sequence. This is done for
convenience, the discussion of the a priori estimate is available in
the literature.  We give here only a brief summary of some facts.

(a) It is sufficient to make only the (weaker) assumption of
$L^2(\Omega)$-boundedness of the solution sequence,
\begin{equation}
  \label{eq:apr-bound}
  \int_{\Omega} |H^\eta|^2 + |E^\eta|^2 \le C\,.
\end{equation}
Indeed, assuming only \eqref {eq:apr-bound} and $\Im \eps_b, \Im
\eps_w > 0$, one obtains \eqref{eq:bound-improved} on subdomains
$\tilde\Omega$ with $R\subset\tilde\Omega\subset\subset \Omega$ by
testing the equations with the solutions (multiplied with cut-off
functions) and by considering real and imaginary parts of the results.

(b) Analysis of the scattering problem.  In a scattering problem, one
considers $\Omega = \R^3$ and uses an outgoing wave condition for the
diffracted field as a boundary condition. In this setting, it is not
easy to derive a local $L^2$-bound as in \eqref{eq:apr-bound}. In
\cite{BouchitteSchweizer-Max} and \cite{Lamacz-Schweizer-Max} we
succeeded to work with a contradiction argument: Assuming that the
local $L^2$-norms $t_\eta$ are unbounded along a solution sequence,
the rescaled solution sequence $(E^\eta, H^\eta)/t_\eta$ is locally
bounded; the homogenization result for bounded sequences can be
applied and provides a limit system. In the case $t_\eta\to \infty$,
the limit system has a vanishing incident field and therefore only the
trivial solution. Together with a compactness argument, this provides
a contradiction to the fact that the solutions $(E^\eta,
H^\eta)/t_\eta$ are normalized.

(c) We mention that the trick described in (b) did not work in the
case of obstacles as in \cite{BouchitteSchweizer-Plasmons}, where the
inclusions have a macroscopic length.  The long obstacles in that
contribution result in the geometrical problem that two points in $R$
with distance of order $\eta$ can have a distance of order $1$ in the
metric that is induced by the coefficient $\eps_\eta$.  In the
contribution at hand, the complement of the inclusions is connected in
each periodicity cell, which suggests that the scattering problem
could be solved with Theorem \ref {thm:effective-Max} arguing as in
\cite{BouchitteSchweizer-Max}.

\medskip {\bf Interpretation of the result and comparison to
  experiments.} The result of this contribution is the following: One
can combine in each periodicity cell a resonator and a metallic wire
structure.  The resonator with relative permittivity $\eps_b$ can lead
to a negative magnetic coefficient $\mu^\eff(\omega)$. A negative
relative permittivity $\eps_w$ in the wires can lead to a negative
electric coefficient $\eps^\eff$. Since the wires are thin, the
effects are decoupled: The coefficient $\mu^\eff(\omega)$ is
determined by cell-problems that do not see the wires, the coefficient
$\eps^\eff$ is a result of a negative permittivity in the wires and is
independent of the frequency.

In experiments, the metallic wires can indeed have a large negative
real part, hence our assumptions on the wires are well justified. On
the other hand, the experiments typically use metallic resonators
(e.g.\,split ring resonators); in this set-up, the resonance is not
based on the Mie-resonance of dielectrics, but on capacitor-inductor
interactions.  The methods in this article can also cover this case:
Combining the thin wires with the {\em metallic} split-ring resonators
of \cite{BouchitteSchweizer-Max} or \cite{Lamacz-Schweizer-Max} would
lead to the same result (we emphasize that the coefficient $\kappa$ in
\cite{BouchitteSchweizer-Max} or \cite{Lamacz-Schweizer-Max} can also
be a complex number). We present here the case of dielectrics, since
the analysis of the resonance is much simpler in this case.

Let us high-light why both coefficients can have eigenvalues with
negative real part.  The tensor $\mu^{\eff}$ of \eqref {eq:HhatH-mu} can
be expressed with eigenfunctions $(\fhi_n)_{n\in \N}$ of an eigenvalue
problem in the cell as
\begin{equation}
  \label{eq:mu-eff}
  \mu^{\eff}_{ij} = \delta_{ij} + \sum_{n\in \N} \frac{\eps_b k^2}{\lambda_n - \eps_b k^2}
  \left(e_j\cdot \int_Y \fhi_n\right)
  \left(e_i\cdot \int_Y \fhi_n\right)\,,
\end{equation}
where $k = \omega \sqrt{\eps_0 \mu_0}$ and $(\lambda_n)_{n\in \N}$ are
the eigenvalues. The formula is taken from \cite{BouchitteBourel2009},
it is identical to the present case, since the cell problem is
identical. We sketch the essential arguments that lead from \eqref
{eq:HhatH-mu} to \eqref {eq:mu-eff} in Appendix \ref{sec.appA}. The
formula shows that, if $\eps_b k^2$ is close to an eigenvalue of the
cell problem, then the coefficient $\mu^{\eff}$ can be large in
absolute value and the real part can have both signs.

For the effective permittivity $\eps^{\eff}$, we have the formula
\begin{equation}
  \label{eq:eps-eff-def}
  \eps^{\eff} := A^{\eff} + \pi\alpha^2 \eps_w\,,
\end{equation}
where the (positive) tensor $A^{\rm eff}$ is defined in \eqref
{eq:A-tensor}.  While $A^{\rm eff}$ (and, obviously, also
$\pi\alpha^2$) are always positive, a negative sign of $\Re\, \eps_w$
can lead to a negative real part of (all the eigenvalues of)
$\eps^{\eff}$.

\medskip {\bf Notation.} Spaces of periodic functions are denoted with
the sharp-symbol $\sharp$, e.g.\,as $H^1_\sharp(Y)$.  We use the
wedge-symbol for the cross-product, $v\wedge w := (v_2 w_3 - v_3 w_2,
v_3 w_1 - v_1 w_3, v_1 w_2 - v_2 w_1)$, and the curl-operator
$\curl\,v = \nabla\wedge v$.  Constants $C$ are always independent of
$\eta$, they may change from line to line.

\section{Two-scale limits and cell problems}

In this section, we define the (rescaled) dielectric current $J^\eta$
and consider the two-scale limits $E_0(x,y)$, $H_0(x,y)$, and
$J_0(x,y)$ of the three sequences $H^\eta$, $E^\eta$, and $J^\eta$. We
derive cell problems that determine the two-scale limits up to the
macroscopic averages $E(x)$ and $H(x)$. The procedure uses two-scale
convergence and follows closely the lines of other contributions in
the field, e.g.\,\cite{BouchitteBourel2009, BouchitteSchweizer-Max,
  Lamacz-Schweizer-Max}. The crucial difference regards the inclusion
of thin wires.  In our setting, the cell problems are identical to
those of \cite{BouchitteBourel2009} --- despite the wire
structures. On the one hand, this fact allows to use the results of
\cite{BouchitteBourel2009} on the cell problems. On the other hand,
the effect of the wires decouples from the effect of the dielectric
inclusions. The wires enter only in the derivation of the macroscopic
equation.

\paragraph{The flux $J^\eta$.}

Besides $E^\eta$ and $H^\eta$, we consider a third quantity, namely
the rescaled dielectric field $J^\eta : \Omega\to \C^3$, defined by
\begin{equation}
  \label{eq:J-def}
  J^\eta := \eta\eps_\eta E^\eta\,.
\end{equation}
The definition of $\eps_\eta$ and estimate \eqref {eq:bound-improved}
imply
\begin{equation}
  \label{eq:bound-current}
  \int_{\Omega} |J^\eta|^2 
  = \int_{\Omega\setminus (\Sigma_\eta\cup \Gamma_\eta)} |\eta E^\eta|^2
  + \int_{\Sigma_\eta} |\eps_b|^2 \frac1{\eta^2} |E^\eta|^2
  + \int_{\Gamma_\eta} |\eps_w|^2 \frac1{\eta^2} |E^\eta|^2\le C\,.
\end{equation}
The $L^2(\Omega)$-boundedness of the unknowns $H^\eta$, $E^\eta$, and
$J^\eta$ implies that we can find a sequence $\eta = \eta_i \to 0$ and
two-scale limit functions $E_0(x,y)$, $H_0(x,y)$, and $J_0(x,y)$ such
that $H^\eta$, $E^\eta$, and $J^\eta$ converge weakly in two scales to
the corresponding limit functions (compactness with respect to weak
two-scale convergence).

\subsection{Cell-problem for $E_0$}

\begin{lemma}[Cell-problem for $E_0$]\label{lem:cell-E0}
  Let $(E^\eta, H^\eta)$ be a sequence of solutions of the Maxwell's
  equations as described in Theorem \ref {thm:effective-Max}, and let
  $E_0(x,y)$ be a two-scale limit of $E^\eta$. Then, for almost every
  $x\in R$, the function $E_0 = E_0(x,.)\in H^1_\sharp(Y)$
  satisfies
  \begin{align}
    \label{eq:Ecell1}
    \curl_y\, E_0 &= 0 \text{ in }\, Y\,,\\
    \label{eq:Ecell2}  
    \div_y\, E_0 &= 0 \text{ in }\, Y\setminus \bar\Sigma_Y\,,\\
    \label{eq:Ecell3}
    E_0&=0 \text{ in }\, \Sigma_Y\,.
   \end{align}
   For a given cell-average $E(x) = \meanint_Y E_0(x,y)\, dy$, there
   exists a unique solution $E_0$ of the above problem.
\end{lemma}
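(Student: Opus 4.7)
The plan is to extract, successively, the four constraints on the two-scale limit $E_0$, then to read off existence and uniqueness from a Helmholtz decomposition on the torus. The identity $E_0=0$ in $\Sigma_Y$ is immediate from \eqref{eq:bound-improved}: on $\Sigma_\eta$ we have $|\eps_\eta|=|\eps_b|\eta^{-2}$, hence $\|E^\eta\|_{L^2(\Sigma_\eta)}\le C\eta$, so $\mathbf{1}_{\Sigma_\eta}E^\eta\to 0$ strongly in $L^2(\Omega)$ and its two-scale limit $\mathbf{1}_R(x)\mathbf{1}_{\Sigma_Y}(y)E_0(x,y)$ must vanish. For $\curl_y E_0=0$ in $Y$, I combine \eqref{eq:Max1} with the $L^2$-bound on $H^\eta$ (hence on $\curl E^\eta$) and test against $\phi(x)\psi(x/\eta)$ with $\phi\in C_c^\infty(\Omega)$, $\psi\in C^\infty_\sharp(Y)^3$. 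Using $\curl[\phi\psi(\cdot/\eta)]=\nabla\phi\wedge\psi(\cdot/\eta)+\eta^{-1}\phi(\curl_y\psi)(\cdot/\eta)$, multiplying by $\eta$, and passing to the two-scale limit kills the bounded $\curl E^\eta$-term and the $\nabla\phi$-term, leaving $\iint E_0\cdot\phi\,\curl_y\psi=0$ for all admissible $(\phi,\psi)$.

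The divergence identity $\div_y E_0=0$ on $Y\setminus\bar\Sigma_Y$ is the most delicate step, because the wires lie inside this set yet there $\eps_\eta=\eps_w\eta^{-2}$ and the classical identity $\div E^\eta=0$ breaks on $\partial\Gamma_\eta$. I therefore first prove the statement on the smaller open set $Y\setminus(\bar\Sigma_Y\cup\Gamma_Y)$, where $\Gamma_Y=\bigcup_j\Gamma_Y^j$ is the union of the central axes: for any $q\in C_c^\infty(Y\setminus(\bar\Sigma_Y\cup\Gamma_Y))$ and $\eta$ small enough, $q(\cdot/\eta)$ is supported where $\eps_\eta\equiv 1$ and $\div E^\eta=0$ classically, so $\int E^\eta\cdot(\nabla_y q)(x/\eta)\phi(x)\,dx = -\eta\int q(x/\eta)E^\eta\cdot\nabla\phi\,dx = O(\eta)$ and the two-scale limit yields $\int_Y E_0\cdot\nabla_y q\,dy = 0$. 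To extend this to arbitrary $q\in C_c^\infty(Y\setminus\bar\Sigma_Y)$, I insert a cutoff $\chi_\delta$ that vanishes in a $\delta/2$-tube $T_{\delta/2}$ around $\Gamma_Y$ and equals one outside $T_\delta$, with $|\nabla\chi_\delta|\le C/\delta$; because $\Gamma_Y$ is $1$-dimensional in $\R^3$, $|T_\delta|=O(\delta^2)$ and $\|\nabla\chi_\delta\|_{L^2(Y)}$ stays bounded, while $\|E_0(x,\cdot)\|_{L^2(T_\delta)}\to 0$ by absolute continuity. Applied to $q_\delta:=q\chi_\delta$ and combined with Cauchy--Schwarz, this removes the cutoff in the limit $\delta\to 0$ and yields $\int_Y E_0\cdot\nabla_y q\,dy=0$ on the full set $Y\setminus\bar\Sigma_Y$.

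Once the three constraints are in place, uniqueness and existence for the prescribed mean $E(x)=\meanint_Y E_0(x,y)\,dy$ follow from a standard Helmholtz decomposition on the torus: $\curl_y E_0=0$ together with the mean condition forces $E_0=E(x)+\nabla_y\psi$ for a unique $\psi\in H^1_\sharp(Y)/\R$, and the remaining constraints translate into $\nabla\psi=-E(x)$ in $\Sigma_Y$ (so $\psi$ is constant there by simple connectedness) and $\Delta_y\psi=0$ in $Y\setminus\bar\Sigma_Y$. For uniqueness I test $\Delta_y(\psi_1-\psi_2)=0$ against $(\psi_1-\psi_2)-c$, where $c$ is the common constant value of $\psi_1-\psi_2$ on $\bar\Sigma_Y$; this test function vanishes on $\partial\Sigma_Y$ and the $\partial Y$-boundary term cancels by periodicity, yielding $\int_Y|\nabla_y(\psi_1-\psi_2)|^2=0$. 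Existence is then a routine Lax--Milgram argument on the closed subspace $\{\psi\in H^1_\sharp(Y)/\R : \psi \text{ constant on } \Sigma_Y\}$. The main obstacle is the divergence step: the $L^\infty$-blow-up of $\eps_\eta$ in the wires prevents a direct test with $q\in C_c^\infty(Y\setminus\bar\Sigma_Y)$, and it is the codimension-$2$ geometry of the central axes together with the $L^2$-regularity of $E_0$ that makes the removal-of-singularity argument go through.
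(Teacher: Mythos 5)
Your derivation of \eqref{eq:Ecell1}--\eqref{eq:Ecell3} is correct and follows the same strategy as the paper: the curl relation from testing the $\eta$-scaled equation \eqref{eq:Max1}, the vanishing of $E_0$ on $\Sigma_Y$ from the $\eta^{-2}$-weight in \eqref{eq:bound-improved}, and the divergence relation first on $Y\setminus(\bar\Sigma_Y\cup\Gamma_Y)$ and then extended across the wire axes. The one genuine difference is how the one-dimensional set $\Gamma_Y$ is removed: the paper invokes its vanishing $2$-capacity and uses cut-offs $\Upsilon_\delta$ with $\|1-\Upsilon_\delta\|_{H^1(Y)}\le\delta$ (citing Evans--Gariepy), whereas you use a plain linear cut-off in a $\delta$-tube, whose gradient is only bounded (not small) in $L^2(Y)$, and compensate by the absolute continuity of $\int |E_0|^2$ over the shrinking tube. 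Both arguments are valid; yours is more self-contained (no capacity theory, only the codimension-two volume bound $|T_\delta|=O(\delta^2)$), while the paper's applies verbatim to any singular set of vanishing capacity. Your uniqueness argument is also fine: the difference $w=\psi_1-\psi_2$ has $\nabla_y w=0$ on the connected set $\Sigma_Y$, so it is constant there and testing the harmonicity with $w-c$ is legitimate, giving $\nabla_y w\equiv 0$.

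There is, however, a concrete slip in the existence part. From $E_0=E(x)+\nabla_y\psi$ and \eqref{eq:Ecell3} you obtain $\nabla_y\psi=-E(x)$ in $\Sigma_Y$, which makes $\psi$ \emph{affine} there (i.e. $\psi(y)+E(x)\cdot y$ is constant), not constant. Hence the space $\{\psi\in H^1_\sharp(Y)/\R:\ \psi \text{ constant on } \Sigma_Y\}$ is the wrong constraint set: a Lax--Milgram solution in that space produces $E_0=E(x)\neq 0$ in $\Sigma_Y$, violating \eqref{eq:Ecell3}. The repair is immediate and is exactly the paper's construction: impose the affine Dirichlet datum, i.e. take $\Theta$ periodic, harmonic in $Y\setminus\bar\Sigma_Y$, with $\Theta(y)=-E(x)\cdot y$ (up to an additive constant) in $\Sigma_Y$, and set $E_0=\nabla_y\Theta+E(x)$; periodicity of $\Theta$ then gives $\meanint_Y E_0=E(x)$. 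With this correction your proposal is complete.
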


{\em Easy parts of the proof and consequences.}  We claim that there
exist three elementary solutions $E^1, E^2, E^3\in H^1_\sharp(Y)$ with
the canonical normalization,
\begin{equation}
  \label{eq:Ej}
  E^j : Y\to \R^3\,,\quad E^j 
  \text{ solves \eqref{eq:Ecell1}--\eqref{eq:Ecell3} and }
  \int_Y E^j = e_j\,.
\end{equation}
The elementary solution $E^j$ can be constructed with a solution of a
Laplace problem: Let $\Theta^j$ be harmonic in $Y\setminus \Sigma_Y$
with $\Theta^j(y) = -y\cdot e_j$ in $\Sigma_Y$. Setting $E^j(y) :=
\nabla_y \Theta^j + e_j$ provides a solution with average $\meanint_Y
E^j = e_j$. Since every solution is given by a gradient of a
potential, the construction implies also the uniqueness of solutions
to prescribed averages.

As a consequence of the lemma, we obtain that the two-scale limit
$E_0(x,y)$ can be written with the help of the three basis
functions as
\begin{equation}
  \label{eq:E0-E}
  E_0(x,y) = \sum_{j=1}^3 E_j(x) E^j(y)\,,
\end{equation}
where $E_j$ coincides with the $j$-th component of the weak limit of
the sequence $E^\eta$.

\begin{proof}[Proof of Lemma \ref {lem:cell-E0}]
  It remains to derive the cell problem and to show that it is not
  affected by the presence of the wire structure. The properties of
  two-scale convergence imply $\eta\,\curl\, E^\eta\to \curl_y\,
  E_0(x,y)$ while, on the other hand, Maxwell's equation \eqref
  {eq:Max1} and boundedness of $H^\eta$ yield $\eta\,\curl\, E^\eta\to
  0$. This provides \eqref{eq:Ecell1}.

  When restricted to $\Sigma_\eta$, the function $\eta^{-1} E^\eta$ is
  $L^2$-bounded by estimate \eqref {eq:bound-improved}. This implies
  \eqref{eq:Ecell3}.

  Outside $\Sigma_\eta \cup \Gamma_\eta$, the electric field $E^\eta$
  has a vanishing divergence. This implies 
  \begin{equation}
    \label{eq:div-0-outside}
    \div_y\, E_0 = 0 \text{ in }\, 
    Y\setminus \left(\bar\Sigma_Y \cup \Gamma_Y \right)
  \end{equation}
  for the one-dimensional wire-centers $\Gamma_Y = \bigcup_{j=1}^3
  \Gamma^j_Y$ in the sense of distributions.  We claim that \eqref
  {eq:div-0-outside} implies \eqref{eq:Ecell2}. In order to prove this
  claim we have to argue with the fact that the set $\Gamma_Y$ has a
  vanishing $H^1(Y)$-capacity.

  We work here only with $p=2$, i.e.\,with the $2$-capacity, for
  definitions see e.g.\,\cite{EvansGariepy}. The one-dimensional
  fibres in dimension $n=3$ have a vanishing capacity. This can be
  concluded from Theorem 3 in \cite{EvansGariepy}, page 254, since
  $n-p = 3-2 = 1$ and the one-dimensional Hausdorff-measure of the
  wires is finite.  The more elementary argument is that points in two
  space dimensions have a vanishing capacity and the corresponding
  cut-off functions can be extended to three-dimensions for straight


  By the definition of a vanishing capacity, for every $\delta>0$,
  there exists a function $\Upsilon_\delta$ with the properties
  \begin{align}
    &\Upsilon_\delta : Y\to \R\text{ is of class } H^1_\sharp(Y)\,,
    \label{eq:Ups-1}\\
    &\Upsilon_\delta \text{ vanishes in a neighborhood of } \Gamma_Y\,,
    \label{eq:Ups-2}\\
    &\| 1 - \Upsilon_\delta \|_{H^1(Y)} \le \delta\,.
    \label{eq:Ups-3}
  \end{align}

  Let now $\fhi\in C^1(Y)$ be an arbitrary test-function with support
  contained in $Y\setminus \bar\Sigma_Y$. We consider $\fhi
  \Upsilon_\delta$, for which \eqref {eq:div-0-outside} and the
  properties of $\Upsilon_\delta$ imply
  \begin{equation}
    0 = \int_Y E_0(y)\cdot \nabla (\fhi \Upsilon_\delta)  
    = \int_Y E_0(y)\cdot \nabla \fhi\ \Upsilon_\delta + O(\delta)
    \to \int_Y E_0(y)\cdot \nabla \fhi
  \end{equation}
  as $\delta\to 0$. We hence obtain \eqref{eq:Ecell2}.
\end{proof}

\subsection{Cell-problem for $(H_0, J_0)$} 

The cell-problem for the magnetic field is obtained as in the case
without wires, we do not have to use vanishing capacity arguments. In
fact, the cell problems are identical to those in the case without
wires\footnote{Our equation \eqref {eq:c3} coincides with equation
  (14) of \cite {BouchitteBourel2009}, except for the negative sign in
  front of the second term; we correct here a typo in (14) of \cite
  {BouchitteBourel2009}. We remark that the analysis of \cite
  {BouchitteBourel2009} proceeds with the correct sign after equation
  (14).}.
\begin{lemma}[Cell-problem for the pair $(H_0, J_0)$]
  \label{lem:cell-H0} Let $(E^\eta, H^\eta)$ be a sequence of
  solutions as in Theorem \ref {thm:effective-Max} and let $J^\eta$ be
  defined by \eqref{eq:J-def}. Let $H_0(x,y)$ and $J_0(x,y)$ be
  two-scale limits of $H^\eta$ and $J^\eta$. Then $H_0(x,.)$
  satisfies, for a.e. $x\in R$,
  \begin{align}
    \curl_y\ H_0 + i\omega\eps_0 J_0 &= 0\text{ in } Y,\label{eq:c1}\\
    \div_y\ H_0 &= 0 \text{ in } Y,\label{eq:c2}
  \end{align}
  while $J_0(x,.)$ satisfies, for a.e. $x\in R$,
  \begin{align}
    \curl_y\ J_0 - i \eps_b \omega\mu_0 H_0 &= 0
    \text{ in } \Sigma_Y,\label{eq:c3}\\
    \div_y\, J_0 &= 0\text{ in } Y,\label{eq:c4}\\
    J_0  & =0 \text{ in } Y\setminus\bar\Sigma_Y.\label{eq:c5}
  \end{align}
\end{lemma}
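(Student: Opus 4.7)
My plan is to derive each of the five identities directly from the pointwise Maxwell system \eqref{eq:Max1}--\eqref{eq:Max2} and the definition $J^\eta = \eta\eps_\eta E^\eta$, and then pass to the two-scale limit. In contrast to Lemma \ref{lem:cell-E0}, no capacity arguments are needed here; the wires enter the argument only through their vanishing macroscopic volume $|\Gamma_\eta|=O(\eta^2)$.

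For the $H_0$--equations I would rewrite \eqref{eq:Max2} as $\eta\,\curl\,H^\eta = -i\omega\eps_0 J^\eta$, test against $\psi(x,x/\eta)$ with $\psi$ smooth in $x$ and $Y$-periodic in $y$, and integrate by parts using $\curl[\psi(x,x/\eta)] = \curl_x\psi + \eta^{-1}\curl_y\psi$. Passing to the two-scale limit yields $\int\!\int H_0\cdot\curl_y\psi = -i\omega\eps_0\int\!\int J_0\cdot\psi$, which is \eqref{eq:c1}. Taking the distributional divergence of \eqref{eq:Max1} gives $\div H^\eta=0$ on $\Omega$, while taking the divergence of \eqref{eq:Max2} gives $\div(\eps_\eta E^\eta)=0$ and therefore $\div J^\eta = 0$. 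The standard divergence--compactness of two-scale limits then yields \eqref{eq:c2} and \eqref{eq:c4}.

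The main step is \eqref{eq:c5}. On $\Omega\setminus(\Sigma_\eta\cup\Gamma_\eta)$ the definition \eqref{eq:defepseta} gives $\eps_\eta=1$, hence $J^\eta = \eta E^\eta\to 0$ strongly in $L^2$. On $\Gamma_\eta$ only the $L^2$--bound \eqref{eq:bound-current} is at hand, but since the macroscopic wire volume is $|\Gamma_\eta|=O(\eta^2)$, Cauchy--Schwarz gives
\begin{equation*}
\left|\int_{\Gamma_\eta} J^\eta(x)\cdot\psi(x,x/\eta)\,dx\right|
\le \|\psi\|_\infty\,\|J^\eta\|_{L^2(\Gamma_\eta)}\,|\Gamma_\eta|^{1/2} = O(\eta)
\end{equation*}
for every smooth $Y$-periodic $\psi$ whose $y$-support lies in $Y\setminus\bar\Sigma_Y$. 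Consequently $\int\!\int J_0\cdot\psi=0$ for all such $\psi$, proving \eqref{eq:c5}.

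Finally, on $\Sigma_\eta$ the definition of $J^\eta$ together with $\eps_\eta=\eps_b\eta^{-2}$ gives $\eps_b E^\eta = \eta J^\eta$, so that \eqref{eq:Max1} rewrites as $\eta\,\curl\,J^\eta = i\eps_b\omega\mu_0 H^\eta$ in $\Sigma_\eta$. Testing against $\psi$ whose $y$-support lies strictly inside $\Sigma_Y$ (so that the boundary contributions on $\partial\Sigma_\eta$ vanish), integrating by parts, and passing to the two-scale limit gives \eqref{eq:c3}. The main obstacle is \eqref{eq:c5}: one must argue that the $L^2$-mass of $J^\eta$ concentrated on the wires leaves no trace in the two-scale limit. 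This is where the scaling of the wire volume fraction $|\Gamma_\eta|\sim\eta^2$ is decisive, and it is precisely the structural difference to \cite{MR2262964} that allows the cell problem to decouple from the wires.
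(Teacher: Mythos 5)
Your proposal is correct and takes essentially the same route as the paper: \eqref{eq:c1}--\eqref{eq:c4} and \eqref{eq:c3} follow directly from the Maxwell system, the definition of $J^\eta$ and standard two-scale convergence arguments, while \eqref{eq:c5} rests on the a priori bound together with the smallness of the wires. The only (harmless) difference is in \eqref{eq:c5}: the paper uses that $\eta^{-1}J^\eta$ is $L^2$-bounded outside $\Sigma_\eta\cup\Gamma_\eta$, so that $J_0$ vanishes on $Y\setminus(\Sigma_Y\cup\Gamma_Y)$, and then remarks that the limit wire set $\Gamma_Y$ has zero measure, whereas you absorb the wire contribution directly via Cauchy--Schwarz and $|\Gamma_\eta|=O(\eta^2)$ --- both arguments are valid.
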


\begin{proof}
  The Maxwell equations \eqref {eq:Max1}--\eqref {eq:Max2} immediately
  imply relations \eqref{eq:c1}--\eqref{eq:c4}, one only has to use
  the properties of two-scale convergence and the definition of
  $J^\eta$. We note that these five equations hold either on all of
  $Y$ or on $\Sigma_Y$, hence the justification is as in the problem
  without wires.

  Relation \eqref{eq:c5} is a consequence of the a priori estimate: On
  the set $\Omega_\eta := \Omega\setminus (\Sigma_\eta\cup
  \Gamma_\eta)$, the function $\eta^{-1} J^\eta = \eps_\eta E^\eta =
  E^\eta$ is bounded. More precisely: $\eta^{-1} J^\eta
  \one_{\Omega_\eta}$ is bounded in $L^2(\Omega)$ and hence $J^\eta
  \one_{\Omega_\eta}$ converges strongly to $0$ in $L^2(\Omega)$. This
  implies for the two-scale limit function the relation
  \begin{equation}
    \label{eq:J-vanish}
    J_0(x,y) = 0 \text{ for almost every }  
    y\in Y\setminus (\Sigma_Y \cup \Gamma_Y)\,,
  \end{equation}
  for almost every $x\in R$. This relation is identical to
  \eqref{eq:c5}, since $\Gamma_Y$ is a set of vanishing measure.
\end{proof}

It was shown in \cite{BouchitteBourel2009} that, with an appropriate
normalization, the cell-problem \eqref{eq:c1}--\eqref{eq:c5} has a
unique solution. A natural normalization is to prescribe the geometric
average $\oint H$ of the cell solution, we define this average in the
next subsection.  The normalized solutions are denoted by $H^j =
H^j(y)$, $j = 1,2,3$,
\begin{equation}
  \label{eq:Hj}
  H^j, J^j : Y\to \C^3\,,\quad (H^j, J^j) 
  \text{ solves \eqref{eq:c1}--\eqref{eq:c5} and }
  \oint H^j = e_j\,.
\end{equation}
As a result, the two-scale limit $H_0(x,.)$ of $H^\eta$ can be written
as a linear combination of the three shape functions $H^j$. Denoting
the coefficients by $\hat H_j(x)$, we write
\begin{equation}
  \label{eq:H0-expansion}
  H_0(x,y) = \sum_{j=1}^3 \hat H_j(x) H^j(y)\,.  
\end{equation}
It remains to establish the relation between the weak limit $H(x)$ and
the coefficients $\hat H_j(x)$. As the weak limit of $H^\eta$, the
function $H(x)$ coincides with the $Y$-average of $H_0(x,.)$.  Taking
the $Y$-average of \eqref {eq:H0-expansion}, we obtain
\begin{equation}\label{eq:HhatH-H}
  H(x) = \int_Y H_0(x,y)\, dy 
  = \sum_{j=1}^3 \hat H_j(x) \int_Y H^j(y)\, dy
  = \mu^{\eff}\, \hat H(x)
\end{equation}
by setting
\begin{equation}
  \label{eq:HhatH-mu}
  \mu_{i,j}^{\eff} := e_i \cdot \int_Y H^j(y)\,dy\,.
\end{equation}
Relation \eqref {eq:HhatH-H} was used in Theorem \ref
{thm:effective-Max} to define the field $\hat H$.

\subsection{Geometric averaging}
\label{ssec.geometric-av}

There are two possibilities to average a function $u:Y\to \C^3$. The
standard averaging procedure is the volumetric average, given by the
integral over $Y$.  In a geometric averaging procedure, we seek for
the typical value of an integral over a curve that connects opposite
sides. The concept has been made precise with the definition of a {\em
  circulation vector} in \cite{BouchitteBourel2009}: To a vector field
$u\in X = \{ u\in H^1_\sharp(Y; \C^3)| \curl_y u = 0 \text{ on }
Y\setminus \Sigma_Y\}$, we associate the circulation vector $\oint u
\in \C^3$. It is defined by the property
\begin{equation}
  \label{eq:circulation}
  \int_Y u\cdot \fhi = 
  \left(  \oint  u \right) \cdot \left(  \int_Y  \fhi\right) \,
\end{equation}
for every test-function $\fhi\in L^2(Y, \C^3)$ with $\fhi(y) = 0$ for
almost every $y\in \Sigma_Y$ and with $\nabla\cdot \fhi = 0$ in the
sense of distributions on $Y$, more precisely: $\int_{Y} \fhi\cdot
\nabla U = 0$ for every $U\in H^1_\sharp(Y)$.

We must verify that \eqref {eq:circulation} indeed defines a vector
$\oint u \in \C^3$. To this end we must show that, for every function
$\fhi$ with average $\int_Y \fhi = e_i$, the expression $\int_Y u\cdot
\fhi$ has the same value. By linearity, it is sufficient to show that
$\int_Y u\cdot \fhi_0$ vanishes for every test-function $\fhi_0$ with
a vanishing average.  

As a preparation, we note that the conditions on the test-functions
$\fhi$ imply that, for every function $U\in H^1_\sharp(Y)$, there
holds $\int_{Y\setminus \Sigma_Y} \fhi\cdot \nabla U = \int_{Y}
\fhi\cdot \nabla U = 0$.  Since $Y\setminus \Sigma_Y$ is simply
connected, a curl-free function $u$ can be written, up to an additive
constant $u_0\in \C^3$, as a gradient of a periodic function $U$, i.e.
$u(y) = u_0 + \nabla U(y)$ for $y\in Y\setminus \Sigma_Y$.  We
therefore find $\int_Y u\cdot \fhi_0 = \int_{Y\setminus \Sigma} (u_0 +
\nabla U)\cdot \fhi_0 = u_0\cdot \int_{Y} \fhi_0 = 0$.

To understand the definition of $\oint u \in \C^3$ through \eqref
{eq:circulation} better, let $u$ be a function in $X$ and let $\gamma$
be a curve that does not touch $\bar\Sigma_Y$ and that connects the
lower and the upper face of $Y$, i.e. the side $\{ y_3 = 0\}$ with $\{
y_3 = 1\}$, in points that are identified by periodicity. If $u$ is
continuous and $\gamma$ is differentiable, there holds, for the $3$-rd
component,
\begin{equation}
  \label{eq:circulation-lineint}
  e_3\cdot \oint u = \int_\gamma u\cdot \tau\,,
\end{equation}
where $\tau$ is the tangential vector along $\gamma$. The formula
expresses that every component of the circulation vector of $u$
coincides with the corresponding line integral of $u$. Due to $\curl_y
u = 0$ in $Y\setminus \Sigma_Y$, the line integral is independent of
the curve $\gamma$ by the Stokes' theorem (we recall that $Y\setminus
\Sigma_Y$ is simply connected).

Let us demonstrate \eqref {eq:circulation-lineint} with the help of
special test-functions. The test functions are constructed with the
help of cylinders that are similar to the wires in our homogenization
setting. Focussing on the third component, we use $\Gamma_\delta :=
B_\delta((y_1,y_2)) \times [0,1)$.  Choosing the point $(y_1, y_2)\in
(0,1)^2$ sufficiently close to the boundary and $\delta>0$ small, we
acchieve that $\Gamma_\delta$ does not touch $\bar\Sigma_Y$. In order
to evaluate the circulation vector, we consider the function $\fhi(y)
= e_3\,\one_{\Gamma_\delta}(y)$ that points in the third coordinate
direction and vanishes outside the cylinder. The function $\fhi$ has a
vanishing divergence and it vanishes on $\Sigma_Y$, it is therefore a
valid test-function.  Its volume average is $\int_Y \fhi =
\vol_3(\Gamma_\delta)\, e_3 =: V_3\, e_3$.  We can therefore calculate
the left hand side of \eqref {eq:circulation-lineint} as
\begin{align*}
  e_3\cdot \oint u 
  = \frac1{V_3}
  \int_Y u\cdot \fhi 
  = \frac1{V_3}
  \int_{B_\delta((y_1,y_2))} \left(\int_{\gamma_{(z_1,z_2)}} u\cdot e_3\, dz_3\right)\, dz_1\,dz_2
  = \int_\gamma u\cdot \tau\,.
\end{align*}
In the first equality, we used the definition \eqref {eq:circulation},
in the second equality, we used Fubini's theorem and decomposed the
integral over the cylinder into an integral over the cross-section and
integrals over the corresponding lines. In the third equality, we used
that all line integrals coincide and that $V_3 = \vol_3(\Gamma_\delta)
= \int_{B_\delta((y_1,y_2))} 1$.  We have thus obtained \eqref
{eq:circulation-lineint}.

\paragraph{Application to the two-scale limits $E_0$ and $H_0$.}
We can now understand better the connection \eqref {eq:HhatH-H}
between $\hat H(x)$ and $H(x)$. Confronting the normalization of
$H^j(y)$ in \eqref {eq:Hj} with the definition of $\mu^\eff$ in \eqref
{eq:HhatH-mu},
\begin{equation}\label{eq:H-normalization-rewrite}
  \oint H^j = e_j\,,
  \qquad\quad
  \mu^{\eff}\cdot e_j = \int_Y H^j\,,
\end{equation}
we see that the matrix $\mu^\eff$ corresponds to the factor between
$Y$-averages and geometric averages of cell solutions.  Instead, for
the cell-solutions $E^j$, the two averaging procedures coincide,
\begin{equation}\label{eq:E-averages-coincide}
  \oint E^j = \int_Y E^j = e_j\,,
\end{equation}
which is a consequence of $\curl_y E^j = 0$ on $Y$: all line integrals
coincide, even if the curves intersect $\Sigma_Y$. This fact yields
$\hat E(x) = E(x)$.

\section{Macroscopic equations}
\label{sec.macro-equations}

In this section, we obtain the macroscopic equation \eqref {eq:eff2}.
The starting point for the derivation is the Maxwell equation \eqref
{eq:Max2} and our knowledge about the two-scale limits $E_0$ and
$H_0$.  The main idea is to construct special test-functions for
\eqref {eq:Max2}. A first type of test-functions has a vanishing curl
in the whole domain, a second type of test-functions has a specified
curl in one of the wires.

\subsection{The $\eta$-dependent test-functions avoiding wire
  integrals}

We start the construction by defining, for $j\in \{1,2,3\}$ fixed, a
potential $\Theta^j_\eta : Y\to \R$ of class $\Theta^j_\eta \in
H^1_\sharp(Y)$.  We set $Y_\eta^* := Y\setminus (\bar\Sigma_Y\cup
\overline{\bigcup_{i=1}^3 \Gamma^{\eta,i}_Y})$ and define
$\Theta^j_\eta$ as the solution of
\begin{align}
  \Theta^j_\eta(y) &= -e_j\cdot y\text{ in } 
  \Sigma_Y\cup \bigcup_{i\neq j} \Gamma^{\eta,i}_Y\,,\label{eq:Theta-1}\\
  \Theta^j_\eta(y) &= 0\text{ in } \Gamma^{\eta,j}_Y\,,\label{eq:Theta-2}\\
  \Delta \Theta^j_\eta(y) &= 0\text{ in } Y_\eta^*\,.\label{eq:Theta-3}
\end{align}
The construction allows to define $\vth^j_\eta : Y\to \R^n$ with the
gradient of $\Theta^j_\eta$,
\begin{equation}
  \label{eq:vth-def}
  \vth^j_\eta = \nabla \Theta^j_\eta + e_j\,.
\end{equation}
This provides a function $\vth^j_\eta$ that satisfies
\begin{align}
  \curl\, \vth^j_\eta &= 0\text{ in } Y\,,\\
  \div\, \vth^j_\eta &= 0\text{ in } Y_\eta^*\,,\\
  \vth^j_\eta &= 0\text{ in } \Sigma_Y\cup \bigcup_{i\neq j} \Gamma^{\eta,i}_Y\,,\label{eq:vth-2}\\
  \vth^j_\eta &= e_j\text{ in } \Gamma^{\eta,j}_Y\,,
\end{align}
with the normalization $\int_Y \vth^j_\eta = e_j$.  We note that
$\vth^j_\eta$ is closely related to the solution $E^j:Y\to \R^n$ of
the electrical cell problem (which does not see the wires). In fact,
we have the following convergence result.

\begin{lemma}\label{lem:vth-convergence}
  There holds
  \begin{equation}
    \label{eq:vth-convergence}
    \vth^j_\eta \to E^j \text{ in } L^2(Y)\text{ as } \eta\to 0.
  \end{equation}
\end{lemma}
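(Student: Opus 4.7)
The strategy is to reinterpret $\vth^j_\eta$ and $E^j$ as gradients of Dirichlet-minimizing potentials in $H^1_\sharp(Y)$ and to establish strong $H^1_\sharp$-convergence of the potentials by matching energies. With $\Theta^j\in H^1_\sharp(Y)$ harmonic in $Y\setminus\bar\Sigma_Y$ and equal to $-e_j\cdot y$ on $\Sigma_Y$, the function $\Theta^j$ is the unique minimizer of $v\mapsto\int_Y|\nabla v|^2$ over $\calA := \{v\in H^1_\sharp(Y): v=-e_j\cdot y \text{ on } \Sigma_Y\}$. By construction, $\Theta^j_\eta$ minimizes the same energy over the strictly smaller class $\calA_\eta\subset\calA$ in which, additionally, $v=-e_j\cdot y$ on $\Gamma^{\eta,i}_Y$ for $i\ne j$ and $v=0$ on $\Gamma^{\eta,j}_Y$. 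The inclusion $\calA_\eta\subset\calA$ yields at once the lower bound $\int_Y|\nabla\Theta^j|^2\le\int_Y|\nabla\Theta^j_\eta|^2$.

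\smallskip
The main step is a matching upper bound via a recovery competitor $\tilde\Theta_\eta\in\calA_\eta$ with $\nabla\tilde\Theta_\eta\to\nabla\Theta^j$ in $L^2(Y)$. Using the vanishing $2$-capacity of one-dimensional wires in $\R^3$ already exploited in Lemma \ref{lem:cell-E0}, I would choose cut-offs $\chi^i_\eta\in H^1_\sharp(Y)$, $i=1,2,3$, each equal to $1$ on $\Gamma^{\eta,i}_Y$, supported in a shrinking tubular neighborhood of the centerline $\Gamma^i_Y$ that is disjoint from $\bar\Sigma_Y$ and from the other wires, with $\|\chi^i_\eta\|_{L^2(Y)}+\|\nabla\chi^i_\eta\|_{L^2(Y)}\to 0$ (concretely via logarithmic radial cut-offs on annular cross-sections). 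The competitor is
\[
\tilde\Theta_\eta := \Big(1-\sum_{i=1}^3\chi^i_\eta\Big)\Theta^j + \sum_{i\ne j}\chi^i_\eta(-e_j\cdot y),
\]
which lies in $\calA_\eta$ by direct inspection (the $\Sigma_Y$-condition is inherited from $\Theta^j$, the wire conditions from the cut-offs), and is periodic because supp $\chi^i_\eta$ is separated from the faces of $Y$ transverse to $e_i$ for small $\eta$. Expanding the difference $\nabla\tilde\Theta_\eta-\nabla\Theta^j$ produces terms of the two forms $\chi^i_\eta\, a_i$ and $b_i\,\nabla\chi^i_\eta$, where $a_i$ and $b_i$ are bounded on supp $\chi^i_\eta$ (using that $\Theta^j$ is smooth on $Y\setminus\bar\Sigma_Y$ by elliptic regularity, and that wires lie in this set for small $\eta$). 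Both contributions vanish in $L^2(Y)$ by the chosen properties of $\chi^i_\eta$.

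\smallskip
Matching the two bounds yields the energy convergence $\|\nabla\Theta^j_\eta\|_{L^2}\to\|\nabla\Theta^j\|_{L^2}$. The family $(\Theta^j_\eta)$ is then bounded in $H^1_\sharp(Y)$, so along a subsequence $\Theta^j_\eta\weak\Theta^*$. Passing to the limit in the Euler--Lagrange identity $\int_Y\nabla\Theta^j_\eta\cdot\nabla\fhi = 0$, first for $\fhi\in C^\infty_c(Y\setminus(\bar\Sigma_Y\cup\Gamma_Y))$ (admissible in $\calA_\eta$ for all small $\eta$) and then extended to all $\fhi\in C^\infty_c(Y\setminus\bar\Sigma_Y)$ by the same capacity-density argument used in Lemma \ref{lem:cell-E0}, together with the boundary trace $\Theta^*=-e_j\cdot y$ on $\Sigma_Y$, forces $\Theta^*=\Theta^j$ by uniqueness of the Dirichlet problem; hence the whole sequence converges weakly. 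Weak convergence combined with norm convergence in the Hilbert space $H^1_\sharp(Y)$ upgrades to strong convergence, and therefore $\vth^j_\eta=\nabla\Theta^j_\eta+e_j\to\nabla\Theta^j+e_j=E^j$ in $L^2(Y)$. The only delicate step is the construction of the recovery competitor and the quantitative capacity estimates for the wire cut-offs; once these are in hand, the remaining estimates are routine.
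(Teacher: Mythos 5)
Your proof is correct and follows essentially the same route as the paper: both compare Dirichlet energies of the constrained minimizers $\Theta^j_\eta$ and $\Theta^j$, build a recovery competitor in the $\eta$-constrained class from capacity-type cut-offs around the wires (carefully not imposing $-e_j\cdot y$ on the $j$-th wire, so periodicity survives), deduce convergence of the energies, and then upgrade weak plus norm convergence to strong convergence, which gives $\vth^j_\eta\to E^j$ in $L^2(Y)$. One small slip in wording: the periodicity of the term $\chi^i_\eta\,(-e_j\cdot y)$ for $i\neq j$ rests on the support of $\chi^i_\eta$ avoiding the faces of $Y$ transverse to $e_j$ (its support necessarily crosses the faces transverse to $e_i$, since the wire does), but this does not affect the validity of your construction.
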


\begin{proof}
  We fix the index $j\in \{1,2,3\}$ and use variational arguments. The
  potentials $\Theta^j_\eta$ minimize the Dirichlet functional
  \begin{equation*}
    A(\Theta) := \int_Y |\nabla \Theta|^2\quad \text{ on }\quad
    X_\eta := \left\{ \Theta_\eta\in H^1_\sharp(Y,\R) | \Theta_\eta \text{ satisfies 
      \eqref {eq:Theta-1}
      and  \eqref {eq:Theta-2}} \right\}\,.
  \end{equation*}
  The potential $\Theta^j$ (that provides the cell solution $E^j =
  \nabla_y \Theta^j + e_j$) minimizes the Dirichlet functional $A$ on
  the set $X_* := \left\{ \Theta\in H^1_\sharp(Y,\R) | \Theta = -e_j\cdot
    y\text{ in } \Sigma_Y \right\}$. From $X_\eta\subset X_*$ we
  immediately obtain $A(\Theta^j) \le A(\Theta^j_\eta)$.

  \smallskip Our aim is to show the convergence of the energies as
  $\eta\to 0$,
  \begin{equation}
    \label{eq:aim-normconv}
    A(\Theta^j_\eta) \to A(\Theta^j)\,.    
  \end{equation}
  We observe that \eqref {eq:aim-normconv} implies the lemma: The
  functions $\Theta^j_\eta$ are bounded in $H^1_\sharp(Y)$, hence we
  can select a weakly convergent subsequence. The weak lower
  semi-continuity of $A$ on $X_*$ together with \eqref
  {eq:aim-normconv} implies that every weak limit of $\Theta^j_\eta$
  minimizes $A$, it therefore coincides with the (unique) minimizer
  $\Theta^j$. Since \eqref {eq:aim-normconv} implies additionally the
  convergence of norms, we can conclude the strong convergence of the
  sequence. By uniqueness of the limit, the whole sequence converges.

  \medskip The convergence \eqref {eq:aim-normconv} follows if we
  show, for arbitrary $\delta_0>0$, that the relation
  $A(\Theta^j_\eta) \le A(\Theta^j) + \delta_0$ holds for every
  sufficiently small $\eta$.  In order to obtain this inequality, we
  construct, for arbitrary $\delta_0>0$, a comparison function $\tilde
  \Theta^j_\delta\in X_\eta$ with $A(\Theta^j_\eta) \le A(\tilde
  \Theta^j_\delta) \le A(\Theta^j) + \delta_0$.

  We use the cut-off functions $\Upsilon_\delta$ of \eqref
  {eq:Ups-1}--\eqref {eq:Ups-3}, more precisely: The function
  $\Upsilon_\delta$ is periodic, vanishes on the subset of all wires
  $\Gamma_Y^\eta = \bigcup_{i=1}^3 \Gamma_Y^{\eta,i}$, equals $1$ on
  $\Sigma_Y$ and is close to $1$ in the $H^1_\sharp(Y)$-norm. We
  additionally use a function $\Upsilon^*_\delta$ with similar
  properties: $\Upsilon^*_\delta$ is periodic, vanishes on the subset
  $\bigcup_{i\neq j}^3 \Gamma_Y^{\eta,i}$ of two wires, is equal to
  $1$ on $\Gamma_Y^{\eta,j} \cup \Sigma_Y$ and close to $1$ in the
  $H^1_\sharp(Y)$-norm.  We construct
  \begin{equation}
    \label{eq:Theta-Ups}
    \tilde \Theta^j_\delta := \Theta^j\, \Upsilon_\delta 
    + (-e_j\cdot y) (1- \Upsilon^*_\delta)\,.
  \end{equation}
  The functions $\tilde \Theta^j_\delta$ are periodic on $Y$; we
  emphasize that this fact is only true since we do {\em not} set
  $\tilde \Theta^j_\delta = -y_j$ on the $j$-th wire.  Furthermore,
  the comparison functions satisfy $\tilde \Theta^j_\delta \in X_\eta$
  for $\eta>0$ sufficiently small. They have the Dirichlet energy
  \begin{align*}
    A(\tilde \Theta^j_\delta) := \int_Y |\nabla \tilde \Theta^j_\delta|^2
    = \int_Y |\nabla (\Theta^j\, \Upsilon_\delta) - \nabla (y_j (1-
    \Upsilon^*_\delta))|^2\,.
  \end{align*}
  The smallness assumption \eqref {eq:Ups-3} and the boundedness of
  $\Theta^j$ (due to the maximum principle) imply
  \begin{align*}
    A(\tilde \Theta^j_\delta) \le \int_Y |\nabla (\Theta^j\, \Upsilon_\delta) |^2 
    + O(\delta)
     \le \int_Y |\nabla \Theta^j|^2\, |\Upsilon_\delta |^2 + O(\delta)
     \le A(\Theta^j) + O(\delta)\,.
  \end{align*}
  Because of the minimality of $\Theta^j_\eta$ we have
  $A(\Theta^j_\eta) \le A(\tilde \Theta^j_\delta)$ and obtain
  therefore $A(\Theta^j_\eta) \le A(\Theta^j) + O(\delta)$. This
  yields \eqref {eq:aim-normconv} and thus the statement of the lemma.
\end{proof}

\begin{proposition}\label{prop:macro-convergences-I}
  Let $(E^\eta, H^\eta)$ be a sequence of solutions as in Theorem \ref
  {thm:effective-Max}. Let $\fhi\in C^1_c(\Omega; \R)$ be arbitrary
  and let $\vth^j_\eta$ be the special test-functions defined
  above. Then there holds, as $\eta\to 0$,
  \begin{align}
    \int_\Omega H^\eta(x)\cdot (\vth^j_\eta(x/\eta)\wedge \nabla\fhi(x))\, dx &\to 
    \int_\Omega\int_Y H_0(x,y)\cdot 
    (E^j(y)\wedge\nabla\fhi(x))\,dy\,dx\,,\label{eq:Heta-lim}\\
    \int_{\Omega\setminus \Gamma^j_\eta} \eps_\eta E^\eta(x) 
    \vth^j_\eta(x/\eta) \fhi(x)\, dx &\to 
    \int_\Omega\int_Y E_0(x,y)\cdot E^j(y)\, \fhi(x)\,dy\, dx\,.
    \label{eq:Eeta-lim}
  \end{align}
\end{proposition}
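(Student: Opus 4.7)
The plan is to derive both limits by replacing the $\eta$-dependent test field $\vth^j_\eta$ with the $\eta$-independent cell solution $E^j$ (paying an error that vanishes by Lemma \ref{lem:vth-convergence}), and then invoking two-scale convergence of $H^\eta$ and $E^\eta$ respectively.

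For \eqref{eq:Heta-lim}, I would split
\begin{equation*}
\int_\Omega H^\eta\cdot(\vth^j_\eta(x/\eta)\wedge\nabla\fhi)\,dx = \int_\Omega H^\eta\cdot(E^j(x/\eta)\wedge\nabla\fhi)\,dx + R^\eta_1,
\end{equation*}
with $R^\eta_1 := \int_\Omega H^\eta\cdot((\vth^j_\eta - E^j)(x/\eta)\wedge\nabla\fhi)\,dx$. The first term converges to the desired right-hand side by two-scale convergence of $H^\eta$ to $H_0$ tested against the admissible function $\psi(x,y) := E^j(y)\wedge\nabla\fhi(x)$ (compactly supported in $x$, in $L^2_\sharp$ in $y$). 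For $R^\eta_1$, Cauchy--Schwarz combined with the $L^2(\Omega)$-bound on $H^\eta$ and the standard periodic scaling $\|f(\cdot/\eta)\|_{L^2(\supp\fhi)} \le C\|f\|_{L^2(Y)}$ gives
\begin{equation*}
|R^\eta_1| \le C\|\nabla\fhi\|_\infty\,\|\vth^j_\eta - E^j\|_{L^2(Y)} \to 0
\end{equation*}
by Lemma \ref{lem:vth-convergence}.

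For \eqref{eq:Eeta-lim}, the first step is to exploit the vanishing of $\vth^j_\eta$ on $\Sigma_Y \cup \bigcup_{i\neq j}\Gamma^{\eta,i}_Y$ (see \eqref{eq:vth-2}). Combined with the exclusion of $\Gamma^j_\eta$ from the domain of integration, the integrand vanishes on $\Sigma_\eta \cup \Gamma_\eta$, so the integral reduces to one over the bulk region $\Omega\setminus(\Sigma_\eta\cup\Gamma_\eta)$, where $\eps_\eta \equiv 1$. Since \eqref{eq:bound-improved} implies $\|E^\eta\|_{L^2(\Omega)}\le C$, the same splitting trick as above replaces $\vth^j_\eta$ by $E^j$ modulo a vanishing error. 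It remains to show
\begin{equation*}
\int_{\Omega\setminus(\Sigma_\eta\cup\Gamma_\eta)} E^\eta\cdot E^j(x/\eta)\,\fhi\,dx \to \int_\Omega\int_Y E_0\cdot E^j\,\fhi\,dy\,dx.
\end{equation*}
I would extend the integration back to the whole $\Omega$ and estimate the two corrections. The contribution on $\Sigma_\eta$ vanishes identically because $E^j \equiv 0$ on $\Sigma_Y$ by \eqref{eq:Ecell3}. The contribution on $\Gamma_\eta$ is $O(\eta^2)$: indeed $|\Gamma_\eta| = O(\eta^2)$, \eqref{eq:bound-improved} yields $\|E^\eta\one_{\Gamma_\eta}\|_{L^2} = O(\eta)$, and $E^j = \nabla\Theta^j + e_j$ is smooth (hence bounded) in a neighborhood of $\Gamma_Y$ because $\Gamma_Y\cap\bar\Sigma_Y = \emptyset$. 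The remaining term $\int_\Omega E^\eta \cdot E^j(x/\eta)\fhi\,dx$ then converges by two-scale convergence of $E^\eta$ to $E_0$.

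The principal technical obstacle is the presence of the $\eta$-dependent test field $\vth^j_\eta(x/\eta)$, which is not of the classical two-scale form $\psi(x,x/\eta)$; Lemma \ref{lem:vth-convergence} provides precisely the strong $L^2(Y)$-convergence that is needed to reduce this to a standard two-scale problem. A secondary delicate point is controlling the electric-field integral over the wires $\Gamma_\eta$ in \eqref{eq:Eeta-lim}: this is possible only because the absolute wire radius is $\alpha\eta^2$, so that $|\Gamma_\eta| = O(\eta^2)$ compensates the $\eta^{-2}$ blow-up of $|\eps_\eta|$ in \eqref{eq:bound-improved} and leaves a genuinely vanishing residual.
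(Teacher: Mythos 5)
Your argument is correct and follows essentially the same route as the paper: both limits rest on the strong $L^2(Y)$-convergence $\vth^j_\eta\to E^j$ of Lemma \ref{lem:vth-convergence}, combined with weak two-scale convergence of $H^\eta$ and $E^\eta$; your splitting with Cauchy--Schwarz and the periodic scaling estimate simply unpacks the strong--weak two-scale product theorem that the paper cites from Allaire, and your explicit treatment of the $\Sigma_\eta$ and $\Gamma_\eta$ corrections (using $E^j=0$ on $\Sigma_Y$ and $\|E^\eta\one_{\Gamma_\eta}\|_{L^2}=O(\eta)$ with $|\Gamma_\eta|=O(\eta^2)$) makes precise what the paper leaves implicit in the remark that $\vth^j_\eta$ vanishes on $(\Sigma_\eta\cup\Gamma_\eta)\setminus\Gamma^j_\eta$.
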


\begin{proof}
  Relation \eqref {eq:Heta-lim} follows from the strong two-scale
  convergence of $\vth^j_\eta(x/\eta)$ to $E^j(y)$ that was shown in
  Lemma \ref {lem:vth-convergence}. For the convergence of products of
  strongly and weakly two-scale convergent sequences we refer to
  \cite{Allaire1992}.

  Concerning \eqref{eq:Eeta-lim} we note that $\eps_\eta = 1$ holds in
  $\Omega_\eta = \Omega\setminus (\Sigma_\eta\cup \Gamma_\eta)$ and
  that $\vth^j_\eta$ vanishes on $(\Sigma_\eta\cup
  \Gamma_\eta)\setminus\Gamma^j_\eta$. Again, the strong two-scale
  convergence of $\vth^j_\eta(x/\eta)$ to $E^j(y)$ implies the
  convergence to the double integral.
\end{proof}

\subsection{The calculation of wire integrals}

With the last proposition, we are almost in the position to derive the
macroscopic limit equations: The left hand side of \eqref
{eq:Heta-lim} coincides with $-\curl\, H^\eta$, tested against a
suitable test-function. The left hand side of \eqref {eq:Eeta-lim}
coincides with $\eps_\eta\, E^\eta$, tested against the same
test-function. Since the two quantities are related by the Maxwell
equation \eqref{eq:Max2}, a comparison of the two right hand sides can
provide the missing effective equation that relates $\curl\, H$ with
$E$.

There is one missing piece: the left hand side of \eqref {eq:Eeta-lim}
does not include the integration of $\eps_\eta E^\eta\cdot e_j$ over
the $j$-th wire, $\Gamma_\eta^j$. Its limit cannot be calculated as in
the previous subsection, since $\Gamma_\eta^j$ connects opposite sides
of the cell in a periodic way. Technically: $\Theta^j_\eta(y) =
-e_j\cdot y$ cannot be demanded in \eqref {eq:Theta-1}, since this
function is not periodic in $y_j$.

We therefore calculate the missing integral with a different
approach. The result is, in some sense, not very surprising: The wire
integrals of $E^\eta$ converge to line integrals of the two-scale
limit function $E_0$ (up to the factor $\pi\alpha^2$ that measures the
volume of the cross section of the wire). This stability of line
integrals over $E^\eta$ in a two-scale limit process is a consequence
of the fact that the curl of $E^\eta$ is controlled.

\begin{proposition}\label{prop:macro-convergences-II}
  Let $(E^\eta, H^\eta)$ be a sequence of solutions as in Theorem \ref
  {thm:effective-Max} and let $\fhi\in C^1_c(\Omega; \R)$ be
  arbitrary. Then there holds, for $j= 1,2,3$,
  \begin{align}
    \int_{\Gamma_\eta^j} \frac{1}{\eta^2} E^\eta(x)\cdot e_j\, \fhi(x)\, dx 
    &\to \pi\alpha^2 \int_\Omega E_j(x)\, \fhi(x)\, dx\,.
     \label{eq:Eeta-lim-Gamma}
  \end{align}
\end{proposition}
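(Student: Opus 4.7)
My plan is to reduce the thin-wire volume integral to a line integral along the wire axis (the wire cross-section area $\pi(\alpha\eta^2)^2=\pi\alpha^2\eta^4$ supplies, after the normalizing $\eta^{-2}$, the prefactor $\pi\alpha^2$), and then identify this line integral with $E_j(x)$ via the circulation formula \eqref{eq:circulation-lineint} and the coincidence \eqref{eq:E-averages-coincide} for $E_0$. The key external ingredient is the uniform bound on $\curl E^\eta = i\omega\mu_0 H^\eta$ in $L^2(\Omega)$, which follows from the energy estimate \eqref{eq:bound-improved}.

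By coordinate permutation, I treat $j=3$. Decompose $\Gamma_\eta^3=\bigcup_{k\in\calK}T_k$ with tube $T_k:=B_{\alpha\eta^2}(z^\eta_k)\times I_k$, axis position $z^\eta_k:=\eta(k_\perp+y^{(3)}_\perp)$, vertical segment $I_k:=[\eta k_3,\eta(k_3+1))$, and $x=(x_\perp,x_3)$. Since $\fhi\in C^1_c(\Omega)$ and $\diam T_k=O(\eta)$, I first replace $\fhi(x)$ on $T_k$ by $\fhi(z^\eta_k,x_3)$; Cauchy--Schwarz with $\|E^\eta\|_{L^2(\Omega)}\le C$ and the total wire volume $\sum_k|T_k|=O(\eta^2)$ shows that this introduces only $o(1)$-error. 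Fubini then rewrites the LHS, modulo $o(1)$, as
\begin{equation*}
\frac{1}{\eta^2}\sum_{k\in\calK}\int_{B_{\alpha\eta^2}(z^\eta_k)}\ell_k(x_\perp)\,dx_\perp,\qquad \ell_k(x_\perp):=\int_{I_k}E^\eta_3(x_\perp,x_3)\,\fhi(z^\eta_k,x_3)\,dx_3.
\end{equation*}

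The central technical step is to show that $\ell_k(x_\perp)$ is essentially independent of $x_\perp$ on the thin disk $B_{\alpha\eta^2}(z^\eta_k)$. Applying Stokes' theorem to the rectangle spanned by $\{x_\perp\}\times I_k$ and $\{z^\eta_k\}\times I_k$, and integrating by parts in $x_3$ to move the $\fhi(z^\eta_k,x_3)$-weight, the difference $\ell_k(x_\perp)-\ell_k(z^\eta_k)$ decomposes into (i) a flux of $(\curl E^\eta)\fhi$ across a rectangle of area $O(\eta^3)$, (ii) $O(\eta)$-correction terms pairing $\fhi'$ with $E^\eta_\perp$ on that rectangle, and (iii) horizontal boundary terms $E^\eta_\perp\fhi$ at the cell faces $x_3=\eta k_3,\eta(k_3+1)$. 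Contributions (i) and (ii) sum globally to $o(1)$ by Cauchy--Schwarz, using $\|\curl E^\eta\|_{L^2}=\omega\mu_0\|H^\eta\|_{L^2}\le C$ and the wire volume being $O(\eta^2)$. The horizontal terms (iii) telescope between vertically adjacent cells along $e_3$, leaving only residual contributions at $\partial R$ controlled by the $C^1$-regularity of $\fhi$. I expect this telescoping/boundary analysis to be the main technical obstacle, since pointwise evaluation of $E^\eta$ on cell-faces is not available for $L^2$-fields and must be interpreted in terms of the $H(\curl)$-trace.

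Once the $x_\perp$-independence is established, the LHS reduces (in the cross-section-averaged sense) to $\pi\alpha^2\eta^2\sum_k\ell_k(z^\eta_k)+o(1)$, which is the test of $E^\eta$ against the microscopically concentrated vector field $\eta^{-2}\one_{\Gamma_Y^{\eta,3}}(x/\eta)\,e_3\,\fhi(x)$. Two-scale convergence $E^\eta\to E_0$, together with the line-integral comparison just established, justifies passage to the limit against this singular test: the thin cylinder $\Gamma_Y^{\eta,3}\subset Y$ collapses onto its axis $\Gamma_Y^3$ weighted by $\pi\alpha^2$, and the resulting line integral along the axis equals $e_3\cdot\oint E_0(x,\cdot)=E_3(x)$ by \eqref{eq:E-averages-coincide}. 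This yields the desired limit $\pi\alpha^2\int_\Omega E_3(x)\fhi(x)\,dx$, completing the proof.
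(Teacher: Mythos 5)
Your overall strategy (use the bound on $\curl E^\eta=i\omega\mu_0H^\eta$ to compare the wire average with nearby line integrals via Stokes, then identify the limit through the circulation of $E_0$) is in the right spirit -- it is essentially a ``by hand'' version of what the paper does -- but as written it has genuine gaps at exactly the quantitative points where the proposition is delicate. First, the objects you manipulate are not defined: $\ell_k(z^\eta_k)$ is a line integral of the merely $L^2$ field $E^\eta$ along the wire axis, the fluxes in (i)--(ii) are surface integrals of the $L^2$ field $\curl E^\eta$ over two-dimensional rectangles, and the horizontal terms (iii) are integrals of $E^\eta_\perp$ over segments and, after averaging in $x_\perp$, over flat disks at fixed heights; none of these traces exist for $L^2$ fields, and you flag this only for (iii). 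Second, even if one interprets everything via Fubini for a.e.\ rectangle, the claimed ``(i) and (ii) sum globally to $o(1)$ by Cauchy--Schwarz'' does not follow: averaging the Stokes comparison over $x_\perp\in B_{\alpha\eta^2}(z^\eta_k)$ with the \emph{single} reference line at the axis produces, after the change of variables, the singular weight $|x_\perp-z^\eta_k|^{-1}$ on the tube, and pairing this with an $L^2$ bound on $\curl E^\eta$ diverges logarithmically. The cure is to smear the reference line over a set of non-degenerate relative area (an annulus of radius $O(\eta)$ around the wire, i.e.\ of fixed size in the $y$-variable); this is precisely what the paper's test field $g_\eta$ in \eqref{eq:g-def} implements, being the rotated gradient of the truncated two-dimensional $\log$-potential, whose curl is $\tfrac2{\alpha\eta}e_3$ inside the wire, zero in an intermediate annulus, and the fixed bounded profile $G_\delta$ of \eqref{eq:curl-g-out} outside, with $\|g_\eta\|_{L^2(Y)}\to0$.

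Third, your concluding step is circular: the field $\eta^{-2}\one_{\Gamma_Y^{\eta,3}}(x/\eta)\,e_3\,\fhi(x)$ is not an admissible test function for two-scale convergence (its $L^2(\Omega)$-norm blows up like $\eta^{-1}$), and the weak two-scale limit simply does not see the wire set of measure $O(\eta^2)$ -- the statement that ``two-scale convergence \dots justifies passage to the limit against this singular test'' is exactly what Proposition \ref{prop:macro-convergences-II} asserts and cannot be invoked; it must be earned by the comparison argument, which is where the gaps above sit (including the unresolved telescoping/trace issue at the column ends near $\partial R$, where $\fhi$ need not vanish). The paper's route avoids all of this with one integration by parts: the wire integral equals $\tfrac\alpha2\int E^\eta\cdot\curl_x[g_\eta(x/\eta)]\fhi$, the terms involving $g_\eta$ itself vanish by \eqref{eq:L2-limit-g-eta} and the $L^2$-bounds on $\curl E^\eta$ and $E^\eta$, and what survives is a standard two-scale pairing of $E^\eta$ with the bounded, $\eta$-independent profile $G_\delta$, which is then evaluated through the circulation identity \eqref{eq:circulation}, \eqref{eq:E-averages-coincide} and the Stokes computation \eqref{eq:G-average} giving the factor $-2\pi\alpha$. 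If you want to salvage your decomposition, you would have to replace the axis as reference by an annulus-averaged family of lines with an integrable kernel -- at which point you will have reconstructed $g_\eta$.
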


Even though the result is suggestive, the proof of Proposition \ref
{prop:macro-convergences-II} requires some preparation.  As in the
proof of Proposition \ref {prop:macro-convergences-I}, we must
construct special test-functions.

\paragraph{Geometry and notation.}
We will construct an $\eta$-dependent sequence of test-functions
$g^\eta:Y\to \R^3$ in several steps. To simplify notation we assume
the following: By choice of other coordinates, the unit cell is $Y =
(-\tfrac12, \tfrac12)^3$, the coordinates are gathered as $y = (y_1,
y_2, y_3) = (\tilde y, y_3)$. We consider $j=3$, i.e. that wire inside
$Y$ that runs in $e_3$-direction. We exploit that the wire is straight
and assume for convenience that it has the central line $\tilde y =
0$, i.e. $\Gamma_Y^3 = \{ y\in Y| \tilde y = 0\}$ and
$\Gamma_Y^{\eta,3} = B_{\alpha\eta}(\Gamma_Y^3) = B_{\alpha\eta}(0)
\times [-\tfrac12, \tfrac12)$. We choose a radius $\delta>0$
sufficiently small such that $B_\delta(\Gamma_Y^3)$ does not intersect
the other sub-structures, $\Gamma_Y^{\eta,1}$, $\Gamma_Y^{\eta,2}$,
and $\Sigma_Y$. In the following, we only consider $\eta$ with
$\alpha\eta<\delta/2$.

\paragraph{Construction of test-functions.}
We start with a scalar function
\begin{equation}
  \label{eq:psi-def}
  \psi_\eta : [\alpha\eta,\infty)\to \R,\qquad 
  \psi_\eta(r) := \alpha\eta \log(r) \rho_\delta(r)\,,
\end{equation}
where $\rho_\delta: [0,\infty)\to \R$ is a smooth and monotone cut-off
function with $\rho_\delta(r) = 1$ for every $r<\delta/2$ and
$\rho_\delta(r) = 0$ for every $r\ge\delta$.  The function $\psi_\eta$
allows to define the vector potential $\Theta_\eta$:
\begin{equation}
  \label{eq:Theta-def}
  \Theta_\eta : Y\to \R,\qquad 
  \Theta_\eta(y) := \psi_\eta(|\tilde y|)\, e_3\,.
\end{equation}
With the help of the potential $\Theta_\eta$, we can finally define
the test-function $g_\eta$ by setting
\begin{equation}
  \label{eq:g-def}
  g_\eta : Y\to \R^3,\qquad 
  g_\eta(y) := 
  \begin{cases}
    -\curl_y \Theta_\eta\quad &\text{ for } |\tilde y| > \alpha\eta\,,\\[3mm]
    {\displaystyle  \frac{1}{\alpha\eta}} \vecttre{-y_2}{y_1}{0}
    \quad &\text{ for } |\tilde y| \le \alpha\eta\,.
  \end{cases}
\end{equation}

For another explanation of the ideas, let us describe the construction
as follows: We consider a (truncated) two-dimensional fundamental
solution outside the disc, and define $g_\eta$ outside the wire as its
rotated gradient. Inside the wire, we define $g_\eta$ as a rigid
rotation.

\paragraph{Properties of the test-functions.}
We start with the observation that $g_\eta$ is continuous, coinciding
with a normalized tangential vector on the cylinder surface. Indeed,
using the two-dimensional tangential vector $\tau = (-y_2,
y_1)/|\tilde y| \equiv (-y_2, y_1,0)/|\tilde y|$, we have defined
$g_\eta(y) = (-y_2, y_1,0)/(\alpha\eta) = \tau$ for $r = |\tilde y| =
\alpha\eta$. On the other hand, for $|\tilde y| > \alpha\eta$, we find
with $r = |\tilde y|$
\begin{align*}
  g_\eta(y) = -\curl_y \Theta_\eta = \del_r \psi_\eta(r) \tau\,,\
  \text{ and }\ \del_r \psi_\eta(r) = \alpha\eta \frac1{r} \rho_\delta(r)
  + \alpha\eta \log(r) \del_r \rho_\delta(r)\,.
\end{align*}
In particular, because of $\del_r \rho_\delta = 0$ in a neighborhood
of $r = \alpha\eta$, there holds $g_\eta(y) \to \tau$ as $r = |\tilde
y|\to \alpha\eta$.

\smallskip The function $g_\eta$ is bounded in the unit cube, $\|
g_\eta \|_\infty \le 2$ independently of $0<\eta<\eta_0$ for some
small $\eta_0>0$. This property follows by inspection of the
expression for $\del_r \psi_\eta(r)$, in which the first term is
bounded by $1$, the second term is bounded by $C \eta
|\log(\eta)|$. Furthermore, there holds
\begin{equation}
  \label{eq:L2-limit-g-eta}
  \| g_\eta \|_{L^2(Y)} \to 0\,.  
\end{equation}
Indeed, for the integral over the small region $|\tilde
y|<\alpha\eta$, we can exploit boundedness of $g_\eta$. For the
integral over $|\tilde y| > \alpha\eta$, we calculate for the first
term $\int_\eta^\delta (\eta/r)^2\, r\,dr = \eta^2 \log(\delta/\eta)
\to 0$ as $\eta\to 0$. The second term is uniformly bounded by $C \eta
|\log(\eta)|$ and we conclude \eqref {eq:L2-limit-g-eta}.

\smallskip In the next step, we calculate the curl of $g_\eta$. For
$|\tilde y| < \alpha\eta$, there holds
\begin{equation}\label{eq:curl-g-inn}
  \curl_y\, g_\eta(y) = \frac1{\alpha\eta} \nabla^\perp\cdot \vectdue{-y_2}{y_1} e_3
  = \frac2{\alpha\eta}\, e_3\,.
\end{equation}
Instead, for $\alpha\eta<|\tilde y|<\delta/2$, we find, using $\div\,
\Theta_\eta = 0$,
\begin{align*}
  \curl_y\, g_\eta(y) 
  = \Delta_{(y_1,y_2)} \left[\alpha\eta\log(|\tilde y|)\right]\, e_3
  = 0\,.
\end{align*}
Finally, for $|\tilde y|\ge \delta/2$, there holds
\begin{equation}\label{eq:curl-g-out}
  \frac1\eta \curl_y\, g_\eta(y) 
  = \Delta_{(y_1,y_2)} \left[\alpha \log(|\tilde y|) 
    \rho_\delta(|\tilde y|)\right]\, e_3
  =: G_\delta(y)\,.
\end{equation}
We extend $G_\delta$ by setting $G_\delta(y) = 0$ for $|\tilde y| <
\delta/2$. We then find $\eta^{-1} \curl_y\, g_\eta(y) = G_\delta(y)$
for $|\tilde y| > \alpha\eta$. It is the main point of our construction, that
this function is non-singular in the limit $\eta\to 0$, despite the
factor $\eta^{-1}$.

As a last step in this preparation, we observe that $G_\delta(y)$
points always in direction $e_3$ and that it is independent of
$y_3$. Its average can be calculated with Stokes' theorem, using the
normal vector $\nu = (y_1, y_2, 0)/(\alpha\eta)$ on the cylinder
surface and the fact that $\nu\wedge\tau = e_3$:
\begin{align}\label{eq:G-average}
  \int_Y G_\delta(y)\cdot e_3\, dy 
  &= \int_{|\tilde y| > \alpha\eta} \frac1\eta \curl_y\, g_\eta(y)\cdot e_3\, dy 
  = - \int_{|\tilde y| = \alpha\eta} \frac1\eta\, \nu\wedge g_\eta(y)\cdot  e_3\, dS(y)\\
  &= -\int_{|\tilde y| = \alpha\eta} \frac1\eta\, dS(y)
  = -2\pi\alpha\,.\nonumber
\end{align}

\paragraph{Convergence of wire averages.}

With the help of the above oscillatory test-functions we can now prove
the convergence result.

\begin{proof}[Proof of Proposition \ref{prop:macro-convergences-II}]
  From \eqref {eq:curl-g-inn}, we know the curl of $g_\eta$ inside the
  wire. Transforming into the $x$-variables, we have $\curl_x \left[
    g_\eta(x/\eta)\right] = 2/(\alpha\eta^2) e_3$ in $\Gamma^3_\eta$.
  We can therefore express the wire integral with the help of the curl
  of an approriately designed test-function and proceed with a
  straight-forward calculation. In the equation marked with
  ``\eqref{eq:curl-g-out}'' below, we perform an integration by parts
  of the curl-operator, in the limit marked with
  ``\eqref{eq:L2-limit-g-eta}'' we exploit the boundedness of $\curl_x
  E^\eta$, which follows from the first Maxwell equation and the
  $L^2$-boundedness on $H^\eta$.
  \begin{align*}
    &\frac2{\alpha}\int_{\Gamma^3_\eta} \frac{1}{\eta^2} E^\eta(x)\cdot
    e_3\,\fhi(x)\, dx \stackrel{\eqref{eq:curl-g-inn}}{=}
    \int_{\Gamma^3_\eta} E^\eta(x)\cdot \curl_x \left[
      g_\eta(x/\eta)\right]\,\fhi(x)\,dx\\
    &\quad = \int_{\Omega} E^\eta(x)\cdot \curl_x \left[
      g_\eta(x/\eta)\right]\,\fhi(x)\,dx - \int_{\Omega\setminus
      \Gamma^3_\eta} E^\eta(x)\cdot \curl_x \left[
      g_\eta(x/\eta)\right]\,\fhi(x)\,dx\\
    &\quad\stackrel{\eqref{eq:curl-g-out}}{=} \int_{\Omega} \curl_x
    E^\eta(x)\cdot \left[ g_\eta(x/\eta)\right]\,\fhi(x)\,dx 
    - \int_{\Omega} E^\eta(x)\cdot \left[
      g_\eta(x/\eta)\right]\wedge \nabla_x \fhi(x)\,dx\\
    &\qquad\qquad - \int_{\Omega\setminus \Gamma^3_\eta} E^\eta(x)\cdot
    G_\delta(x/\eta)\,\fhi(x)\,dx\displaybreak[2]\\
    &\quad\stackrel{\eqref{eq:L2-limit-g-eta}}{\to} -
    \int_{\Omega}\int_Y E_0(x,y)\cdot G_\delta(y)\,\fhi(x)\,dy\,dx\displaybreak[2]\\
    &\quad\stackrel{\eqref{eq:circulation}}{=} 
    - \int_{\Omega}\left(\oint E_0(x,y)\cdot e_3\right) 
    \int_Y G_\delta(y)\,dy\, \fhi(x)\,dx\displaybreak[2]\\
    &\quad\stackrel{\eqref{eq:E-averages-coincide}}{=} 
    - \int_{\Omega} E_3(x) \int_Y G_\delta(y)\cdot e_3\,dy\, \fhi(x)\,dx\displaybreak[2]\\
    &\quad\stackrel{\eqref{eq:G-average}}{=} 
    2\pi\alpha \int_{\Omega} E_3(x)\, \fhi(x)\,dx\,.
  \end{align*}
  This calculation shows the limit \eqref {eq:Eeta-lim-Gamma} and
  hence Proposition \ref {prop:macro-convergences-II}.
\end{proof}

\subsection{Derivation of the macroscopic equations}

\paragraph{Limit process in \eqref {eq:Max1}.} We can take the
distributional limit of \eqref {eq:Max1} and obtain, in the limit
$\eta\to 0$,
\begin{align}
  \curl\, E &\leftarrow
  \curl\, E^\eta = i\omega\mu_0 H^\eta
  \to i\omega\mu_0 H = i\omega\mu_0\hat \mu \hat H\,.
\end{align}
We recall that the last equation is a consequence of the definition of
$\hat \mu$ and $\hat H$. The above distributional limit equation
already provides \eqref {eq:eff1}, the first of the two effective
equations.

\paragraph{Re-writing of the two-scale limit integrals.}

It remains to conclude the second effective equation, \eqref
{eq:eff2}.  We will obtain this equation from \eqref {eq:Max2},
exploiting Propositions \ref {prop:macro-convergences-I} and \ref
{prop:macro-convergences-II}. In order to prepare the calculation, we
re-write terms that have been obtained in Proposition \ref
{prop:macro-convergences-I}.

We define the coefficient matrix
\begin{equation}\label{eq:A-tensor}
  A_{i,j}^{\eff} := \int_Y E^i(y)\cdot E^j(y)\, dy\,.
\end{equation}
With this definition, we can write the $Y$-integral on the right hand
side of \eqref {eq:Eeta-lim}, for $x\in R$, as
\begin{align}\label{eq:Aeff-use}
  \int_Y E_0(x,y)\cdot E^j(y)\, dy
  = \sum_{i=1}^3 \left(\int_Y E^i(y)\cdot E^j(y)\, dy\right) E_i(x)
  = (A^{\eff} E(x))_j\,.
\end{align}

To calculate the right hand side of \eqref {eq:Heta-lim}, we use the
expansion \eqref {eq:H0-expansion} of $H_0$ and the definition of the
circulation vector: The function $E^j\wedge e_k$ is a test-function
which vanishes on $\Sigma_Y$ and which has a vanishing divergence;
regarding the latter we recall $E^j = \nabla \Theta^j + e_j$ which
implies, for $k=3$, $\nabla\cdot (E^j(y)\wedge e_3) = \del_1 E^j_2 -
\del_2 E^j_1 = \del_1 \del_2 \Theta^j - \del_2 \del_1 \Theta^j = 0$.
This allows to express the integral of a product with the circulation,
\begin{equation}\label{eq:H0-circ-hatH}
  \begin{split}
    &\int_Y H_0(x,y) \cdot (E^j(y)\wedge e_k)\, dy
    \stackrel{\eqref{eq:H0-expansion}}{=}   
    \sum_{i=1}^3 \hat H_i(x)  \int_Y H^i(y) \cdot (E^j(y)\wedge e_k)\, dy\\
    &\qquad \stackrel{\eqref{eq:circulation}}{=} 
    \sum_{i=1}^3 \hat  H_i(x)\, \left( \oint H^i \right) 
    \cdot \left( \int_Y E^j(y) \wedge e_k \right) 
    = \sum_{i=1}^3 \hat H_i(x)\, e_i\cdot (e_j\wedge e_k)\,.
  \end{split}
\end{equation}
With this preparation, we can now perform the limit process.

\paragraph{Limit process in \eqref {eq:Max2}.}

In order to perform the limit $\eta\to 0$ in \eqref {eq:Max2}, we use
an oscillating test-function. We choose a smooth function $\fhi
:\Omega\to \R$ with compact support and fix $j\in \{1,2,3\}$. We
consider $\psi_\eta(x) = \vth^j_\eta(x/\eta)\,\fhi(x)$ with
$\vth^j_\eta$ from \eqref {eq:vth-def}. Then the second Maxwell
equation \eqref {eq:Max2} yields
\begin{equation}\label{eq:effective-2}
  \int_\Omega \curl\, H^\eta \cdot \psi_\eta = -i\omega\eps_0 \int_\Omega 
  \eps_\eta E^\eta\cdot \psi_\eta\,.
\end{equation}
It remains to evaluate the limits of both sides of \eqref
{eq:effective-2}. We start with the left hand side. In the subsequent
calculation we use first integration by parts, then $\curl_y\,
\vth^j_\eta(y) = 0$. In the limit process we exploit \eqref
{eq:Heta-lim} of Proposition \ref {prop:macro-convergences-I}:
\begin{align*}
  &\int_\Omega \curl\, H^\eta \cdot \psi_\eta 
  = \int_\Omega H^\eta\cdot \curl\, \psi_\eta
  = - \int_\Omega H^\eta(x)\cdot (\vth^j_\eta(x/\eta) \wedge \nabla\fhi(x))\, dx\\
  &\qquad \to -\int_\Omega\int_Y H_0(x,y)\cdot   (E^j(y)\wedge\nabla\fhi(x))\,dy\,dx
  \stackrel{\eqref{eq:H0-circ-hatH}}{=} 
  -\int_\Omega \hat H(x)\cdot (e_j\wedge \nabla \fhi(x))\, dx   \\
  &\qquad  = \int_\Omega \hat H(x)\cdot \curl\, (\fhi(x)\, e_j)\, dx
  = \int_\Omega (\curl\, \hat H)\cdot e_j\ \fhi \,.
\end{align*}
We now calculate the right hand side of \eqref {eq:effective-2}. In
the first equality, we use that $\vth^j_\eta(x/\eta)$ vanishes on
$\Sigma_\eta$ and on all $\Gamma^i_\eta$ with $i\neq j$, and that it
coincides with $e_j$ in $\Gamma^j_\eta$.  The limit process $\eta\to
0$ for the two integrals has been prepared in \eqref {eq:Eeta-lim} and
\eqref {eq:Eeta-lim-Gamma}.
\begin{align*}
  &\int_\Omega \eps_\eta E^\eta \cdot\psi_\eta 
  = \int_{\Omega\setminus (\Sigma_\eta\cup \Gamma_\eta)} 
  \eps_\eta E^\eta \cdot \vth^j_\eta(x/\eta) \fhi(x)\, dx
  + \int_{\Gamma_\eta^j} \eps_\eta E^\eta(x)\cdot e_j \fhi(x)\, dx \\
  &\qquad \to \int_\Omega\int_Y E_0(x,y)\cdot E^j(y)\, \fhi(x)\,dy\, dx
  + \pi\alpha^2 \eps_w \int_\Omega E_j(x)\, \fhi(x)\, dx\\
  &\qquad \stackrel{\eqref{eq:Aeff-use}}{=}
  \int_\Omega A^{\eff} E(x)\cdot e_j\ \fhi(x)\, dx
  + \pi\alpha^2 \eps_w \int_\Omega E(x)\cdot e_j\ \fhi(x)\, dx\,.
\end{align*}

Since $j\in \{1,2,3\}$ and $\fhi = \fhi(x)$ are arbitrary, we obtain
from \eqref {eq:effective-2}
\begin{align*}
  \curl\, \hat H = -i\omega\eps_0 (A^{\eff} + \pi\alpha^2 \eps_w)\, E(x)\,.
\end{align*}
This is exactly the effective equation \eqref {eq:eff2}, since we have
defined the effective permittivity in \eqref {eq:eps-eff-def} as
$\eps^{\eff} = A^{\eff} + \pi\alpha^2 \eps_w$.

\section*{Conclusions}

We have investigated Maxwell's equations in a periodic material with
small periodicity length $\eta>0$. The permeability is set to $1$, the
permittivity is assumed to have extreme values of order $O(\eta^{-2})$
in the periodic inclusions, it is $1$ outside the inclusions. Two
types of inclusions are present: bulk inclusions and wire inclusions.
The dielectric bulk inclusions have an impact on the effective
permeability $\mu^\eff$, an effect that has been studied before in
\cite{BouchitteBourel2009}.  In our setting, the cell-problems for
$\mu^\eff$ are identical to those of \cite{BouchitteBourel2009} and
the study of the spectral problem is already available. Negative
coefficients $\mu^\eff$ are possible due to resonance effects.  We
mention that our approach could also be carried out with metallic
inclusions ($\eps_b$ with a negative real part), if one constructs
resonators with a split ring structure as in \cite
{BouchitteSchweizer-Max} or \cite{Lamacz-Schweizer-Max}.

The new feature in the present work is the network of thin wires. We
have seen that this network contributes to the effective permittivity
$\eps^\eff$. The formula \eqref {eq:eps-eff-def} for $\eps^\eff$ is
frequency independent, the relevant new contribution is explicitely
given as $\pi\alpha^2 \eps_w$ (and is not given through a cell
problem). The wires do {\em not} create a negative permittivity
through some resonance effect, but merely through an averaging
procedure: $\pi\alpha^2\eta^2$ is the volume of the wires, $\eps_w
\eta^{-2}$ is the permittivity in the wires.

Nevertheless, let us emphasize that we observe here an effect that is
more involved than some simple averaging: Only the connectedness of
the wires across cells makes the effect possible (i.e.: the topology
of the wires). Indeed, if $\Gamma^j_\eta$ did not connect opposite
sides, the test function $\vth_\eta^j$ could be constructed such that
\eqref{eq:vth-2} holds also in $\Gamma^j_\eta$. In that case, the wire
had no effect in the averaged law.

\appendix
\section{Bulk-resonance and the formula for $\mu^\eff$}
\label{sec.appA}

In order to derive formula \eqref {eq:mu-eff} for $\mu^\eff$, one has
to calculate the $Y$-averages of the solutions $H^j$ to the
cell-problem of Lemma \ref {lem:cell-H0}. We briefly sketch the
arguments leading to \eqref {eq:mu-eff}, following
\cite{BouchitteBourel2009}. The underlying concept of describing the
cell-problem for $H^j$ with a bilinear form on a suitable Hilbert
space has been used aleady in \cite{BouchitteSchweizer-Max} (which was
written earlier than \cite{BouchitteBourel2009}), but the useful
concept of geometric averaging was only introduced with
\cite{BouchitteBourel2009}.

One considers the Hilbert space $X_0 := \{ u\in H^1_\sharp(Y)\, |\,
\curl\, u = 0\text{ in } Y\setminus \Sigma_Y, \oint u = 0\}$ and the
bilinear form $b_0(u,v) := \int_Y \{ \curl\, u\cdot \curl\, \bar v +
\div\, u\cdot \div\, \bar v \}$. The solutions $H^j$ of \eqref {eq:Hj}
are of the form $H^j = e_j + u_j$ where $u_j\in X_0$ is determined by
the variational equation ($k^2 = \omega^2 \eps_0 \mu_0$)
\begin{equation*}
  b_0(u_j, v) - k^2 \eps_b \int_Y u_j \cdot \bar v 
  = k^2 \eps_b \int_Y e_j\cdot \bar v\qquad
  \forall v\in X_0\,.
\end{equation*}
The symmetric bilinear form $b_0$ is coercive, it hence defines an
operator $B_0$ that has a compact self-adjoint resolvent on
$L^2(Y)$. The orthonormal eigenfunctions $(\fhi_n)_{n\in \N}$ to
eigenvalues $(\lambda_n)_{n\in \N}$ of $B_0$ allow to express
solutions as $u_j = \sum_n c_{j,n} \fhi_n$ with $c_{j,n} = \eps_b k^2
(\lambda_n - \eps_b k^2)^{-1} \int_Y e_j\cdot \fhi_n$. Definition
\eqref {eq:HhatH-mu} of the effective tensor $\mu^\eff$ provides
\eqref {eq:mu-eff}.

\vspace*{-2mm}
\subsection*{Acknowledgements}

Support of both authors by DFG grant Schw 639/6-1 is greatfully
acknowledged.

\bibliographystyle{abbrv} 
\bibliography{lit_negative}

\end{document}